\newtheorem{theorem}{Theorem}[section]
\newtheorem{lemma}[theorem]{Lemma}
\newtheorem{remark}[theorem]{Remark}
\newtheorem{proposition}[theorem]{Proposition}
\newtheorem{definition}[theorem]{Definition}
\newtheorem{corollary}[theorem]{Corollary}
\newcommand{\rd}{\, \mathrm{d}}
\newcommand{\bszero}{\boldsymbol{0}}
\newcommand{\bsone}{\boldsymbol{1}}
\newcommand{\bsh}{\boldsymbol{h}}
\newcommand{\bsg}{\boldsymbol{g}}
\newcommand{\bsm}{\boldsymbol{m}}
\newcommand{\bsk}{\boldsymbol{k}}
\newcommand{\bsl}{\boldsymbol{\ell}}
\newcommand{\bsx}{\boldsymbol{x}}
\newcommand{\bsy}{\boldsymbol{y}}
\newcommand{\bsgamma}{\boldsymbol{\gamma}}
\newcommand{\bsDelta}{\boldsymbol{\Delta}}
\newcommand{\NN}{\mathbb{N}}
\newcommand{\RR}{\mathbb{R}}
\newcommand{\ZZ}{\mathbb{Z}}
\newcommand{\Acal}{\mathcal{A}}
\newcommand{\Fcal}{\mathcal{F}}
\newcommand{\Ical}{\mathcal{I}}
\newcommand{\Ocal}{\mathcal{O}}
\newcommand{\Pcal}{\mathcal{P}}
\title{A note on approximation in weighted Korobov spaces via multiple rank-1 lattices\thanks{The work of T.G.\ is supported by JSPS KAKENHI Grant Number 23K03210.}}
\author{Mou Cai\thanks{Graduate School of Engineering, The University of Tokyo, 7-3-1 Hongo, Bunkyo-ku, Tokyo 113-8656, Japan (\url{caimoumou@g.ecc.u-tokyo.ac.jp}; \url{goda@frcer.t.u-tokyo.ac.jp})} \and Takashi Goda\footnotemark[2]}
\date{\today}
\begin{document}

\maketitle

\centerline{\emph{Dedicated to Henryk Wo\'{z}niakowski in celebration of his 80th birthday}}

\sloppy

\begin{abstract}
    This paper studies the multivariate approximation of functions in weighted Korobov spaces using multiple rank-1 lattice rules. It has been shown by K\"{a}mmerer and Volkmer (2019) that algorithms based on multiple rank-1 lattices achieve the optimal convergence rate for the $L_{\infty}$ error in Wiener-type spaces, up to logarithmic factors. While this result was translated to weighted Korobov spaces in the recent monograph by Dick, Kritzer, and Pillichshammer (2022), the analysis requires the smoothness parameter $\alpha$ to be greater than $1$ and is restricted to product weights. In this paper, we extend this result for multiple rank-1 lattice-based algorithms to the case where $1/2<\alpha\le 1$ and for general weights, covering a broader range of periodic functions with low smoothness and general relative importance of variables. We also provide a summability condition on the weights to ensure strong polynomial tractability for any $\alpha>1/2$. Furthermore, by incorporating random shifts into multiple rank-1 lattice-based algorithms, we prove that the resulting randomized algorithm achieves a nearly optimal convergence rate in terms of the worst-case root mean squared $L_2$ error, while retaining the same tractability property.
\end{abstract}

\noindent 
\textbf{Keywords:} Approximation of multivariate functions, trigonometric polynomials, rank-1 lattice rule, weighted Korobov space, hyperbolic cross

\noindent
\textbf{AMS subject classifications:} Primary 65D15; Secondary 41A25, 41A63, 42B05, 65D32, 65T40, 65Y20
\section{Introduction}

The approximation of multivariate functions is a fundamental task in scientific computing. In this paper, we study the approximation of periodic functions on the $d$-dimensional unit cube $[0,1]^d$ in the weighted Korobov spaces. To tackle high-dimensional problems where the dimension $d$ can be very large, it is often essential to exploit the weighted structure of these spaces \cite{SW98}. By assigning weights $\bsgamma=(\gamma_u)_{u\subset \NN, |u|<\infty}$ to each subset of variables, we can model the relative importance of a group of variables, which is a key ingredient to achieving tractability \cite{NW08,NW10,NW12}; that is, ensuring that the number of function evaluations required to achieve the error tolerance $\varepsilon$ does not grow exponentially fast in $d$. While the precise definition of the weighted Korobov space is deferred to Section~\ref{sec:background}, we note here that the space is characterized by a smoothness parameter $\alpha > 1/2$, which governs the decay rate of the Fourier coefficients. For the sampling points of our approximation algorithm, we employ a union of multiple rank-1 lattice point sets.

To contextualize our contribution, we provide a brief overview of the history of lattice-based approximation, which has seen significant developments over the last two decades. Early works focused on algorithms using a single rank-1 lattice, where the $L_2$ and $L_{\infty}$ errors were studied for spline-based approaches \cite{ZKH09,ZLH06} or for truncated Fourier series approaches \cite{KSW06,KWW09}.
Efficient construction of the underlying generating vectors was investigated in \cite{CKNS20,CKNS21,KSW06}. As described in the recent monograph on lattice rules \cite[Chapters~13 and 14]{DKP22}, the best-known convergence rates of the worst-case $L_2$ and $L_{\infty}$ errors for single lattice approaches are of orders $N^{-\alpha/2+\delta}$ and $N^{-\alpha(2\alpha-1)/(4\alpha-1)+\delta}$, respectively, for arbitrarily small $\delta>0$, where $N$ denotes the number of function evaluations.

However, a fundamental limitation of the single lattice approach was revealed by \cite{BKUV17}, which proved that any algorithm based on a single rank-1 lattice fails to achieve the optimal $L_2$ error rate in periodic Sobolev spaces, including the Korobov space as a special case. While the optimal rate for these spaces should be of order $N^{-\alpha}$ up to logarithmic factors, any deterministic single lattice approach can attain a rate which is at most of order $N^{-\alpha/2}$. We point out that this limitation is, in fact, inherently related to classical results in the theory of number-theoretic methods; similar lower bound arguments can be traced back to the works of Smolyak \cite{Smo60} and Korobov \cite{K63}, as well as the monograph by Hua and Wang \cite[Theorems~9.14 and 9.15]{HW81}. For the reader's convenience, we summarize these classical lower bound arguments in Appendix~\ref{appendix:b}.

One way to rescue this situation is through randomization. Recently, \cite{CGK24} studied the root mean squared $L_2$ error for the truncated Fourier series approach with a randomized single rank-1 lattice, and found an improvement in the convergence rate compared to deterministic results. However, a more fundamental and structural improvement is possible by moving beyond a single lattice to a union of multiple rank-1 lattices \cite{K19, KV19}. The error analysis in \cite{KV19} showed that such algorithms can indeed achieve the optimal convergence rate for the $L_{\infty}$ error in unweighted Wiener-type spaces, up to logarithmic factors. These results were subsequently translated to weighted Korobov spaces in the recent monograph by Dick, Kritzer, and Pillichshammer \cite[Chapter~15]{DKP22}. 

Despite these advances, the following major gaps remain. First, the analysis in \cite{DKP22} is restricted to the high smoothness case $\alpha > 1$ and to product weights. The extension to general weights is particularly motivated by applications to partial differential equations with random coefficients; in such contexts, the optimal choices of weights often do not take a simple product form \cite{GKNSSS15,KKKNS22,KKS20,KSS12}. Second, the conditions imposed on the product weights $\gamma_j$ to ensure strong polynomial tractability in \cite{DKP22} are demanding for small $\alpha < 2$, which can be potentially improved. Moreover, the optimality of the $L_2$ error (or its randomized version) for multiple lattice-based algorithms has not been fully explored in this weighted setting. 

In this paper, we bridge these gaps as follows:
\begin{enumerate}
    \item We prove that the multiple lattice-based algorithm can attain a nearly optimal convergence rate for the worst-case $L_{\infty}$ error for any smoothness $\alpha>1/2$; that is, we extend the result in \cite{DKP22} to the range $1/2<\alpha\le 1$ and to the case of general weights.
    \item By incorporating random shifts into multiple lattice-based algorithms, we prove that the resulting randomized algorithm achieves a nearly optimal convergence rate for the worst-case root mean squared $L_2$ error for any $\alpha>1/2$.
    \item We establish that both the deterministic and randomized error bounds are independent of the dimension $d$ under suitable summability conditions on the general weights. In the case of product weights, the summability condition we obtain is weaker than that required in \cite[Theorem~15.11]{DKP22} when $\alpha<2$.
\end{enumerate}

Before concluding this introduction, we point out that several other approaches have recently been proposed to improve the convergence rates inherent to single lattice-based algorithms \cite{BGKS25,CDG25,PGK25,PKG25}. As the primary objective of this note is to refine and complement the results shown in \cite[Chapter~15]{DKP22}, we focus on the multiple lattice-based algorithms and do not go into the details about these alternative methods.

The rest of this paper is organized as follows. Section~\ref{sec:background} is devoted to introducing the necessary background, including the weighted Korobov space and rank-1 lattice point sets. In Section~\ref{sec:algorithm}, we formally introduce the multiple lattice-based algorithm from \cite{K19,KV19}. The error analysis is conducted in Section~\ref{sec:error_analysis}, where we present the proof of our main results stated above.

\section{Background and notation}\label{sec:background}

\paragraph{Notation.} Throughout this paper, let $\ZZ$ denote the set of integers and $\NN$ the set of positive integers. Boldface lowercase letters, such as $\bsx$, represent vectors (e.g., $\bsx = (x_1, \dots, x_d) \in [0,1]^d$) whose dimensions are clear from the context unless otherwise stated. For $d\in \NN$, we write $[1{:}d] \coloneqq \{1, \dots, d\}$. For a subset $u \subseteq [1{:}d]$, we denote its complement by $-u \coloneqq [1{:}d] \setminus u$. For a vector $\bsx \in \RR^d$, $\bsx_u \coloneqq (x_j)_{j \in u}$ represents the subvector containing the components of $\bsx$ indexed by $u$. We also use the notation $(\bsx_u, \bszero)$ to denote the vector in $\RR^d$ whose $j$-th component is $x_j$ if $j \in u$ and $0$ if $j \in -u$.

\subsection{Weighted Korobov space}\label{subsec:Korobov}

Any function $f\in L_2([0,1]^d)$ can be formally expanded into a Fourier series:
\[ f(\bsx)\sim \sum_{\bsk\in \ZZ^d}\widehat{f}(\bsk)\, e^{2\pi i \bsk\cdot \bsx}, \]
where the Fourier coefficients $\widehat{f}(\bsk)$ are given by
\[ \widehat{f}(\bsk) = \int_{[0,1]^d}f(\bsx)\, e^{-2\pi i \bsk\cdot \bsx}\rd \bsx, \]
and the notation $\sim$ denotes equality in the $L_2$ sense.

Let $\alpha>1/2$ be a smoothness parameter. To model the relative importance of different subsets of variables, we introduce a collection of general weights $\bsgamma=(\gamma_u)_{u\subset \NN, |u|<\infty}$, where $\gamma_u\ge 0$ for all finite $u\subset \NN$, with the convention $\gamma_{\emptyset}=1$. The weighted Korobov space $H_{d,\alpha,\bsgamma}$ is the reproducing kernel Hilbert space (RKHS) with the reproducing kernel $K_{d,\alpha,\bsgamma}$:
\begin{align*}
    K_{d,\alpha,\bsgamma}(\bsx, \bsy) \coloneqq \sum_{u \subseteq [1{:}d]} \gamma_u^2 \prod_{j \in u} \left( \sum_{k_j \in \ZZ \setminus \{0\}} \frac{e^{2\pi i k_j(x_j - y_j)}}{|k_j|^{2\alpha}} \right).
\end{align*}
Equivalently, this space consists of periodic functions $f$ such that the following norm is finite:
\begin{align*}
    \|f\|_{d,\alpha,\bsgamma} \coloneqq \left( \sum_{\bsk \in \ZZ^d} r_{\alpha, \bsgamma}^2(\bsk) |\widehat{f}(\bsk)|^2 \right)^{1/2},
\end{align*}
where for $\bsk \in \ZZ^d$, the weight $r_{\alpha, \bsgamma}(\bsk)$ is defined using $u = \text{supp}(\bsk) \coloneqq \{j \in [1{:}d] \mid k_j \neq 0\}$ as
\[ r_{\alpha, \bsgamma}(\bsk) \coloneqq \frac{1}{\gamma_u} \prod_{j \in u} |k_j|^\alpha. \] 
We use the convention that an empty product equals $1$, and if $\gamma_u = 0$ for some $u$, we require $\widehat{f}(\bsk) = 0$ for all $\bsk$ with $\text{supp}(\bsk) = u$. Under the condition $\alpha > 1/2$, the series in the kernel converges uniformly, and the functions in $H_{d,\alpha,\bsgamma}$ are continuous and periodic in each variable.

Regarding the weights $\bsgamma$, several special forms have been considered in the literature. For sequences of non-negative numbers $(\gamma_j)_{j \ge 1}$ and $(\Gamma_\ell)_{\ell \ge 0}$, common examples include
\begin{itemize}
    \item Product weights: 
    \[ \gamma_u = \prod_{j\in u}\gamma_j.\]
    \item \emph{Product and order-dependent} (POD) weights:
    \[ \gamma_u = \Gamma_{|u|}\prod_{j\in u}\gamma_j. \]
    \item \emph{Smoothness-driven product and order-dependent} (SPOD) weights: For a double sequence $(\gamma_{j,m})_{j \ge 1, m \in [1{:}\sigma]}$ with $\sigma \in \NN$,
    \[ \gamma_u = \sum_{\bsm_u\in [1{:}\sigma]^{|u|}}\Gamma_{|\bsm_u|_1}\prod_{j\in u}\gamma_{j,m_j}, \]
    where $|\bsm_u|_1=\sum_{j\in u}m_j$.
\end{itemize}
The POD and SPOD weights naturally arise in the analysis of partial differential equations with random coefficients \cite{GKNSSS15, KKKNS22, KKS20, KSS12}.

\subsection{Rank-1 lattice rules}\label{subsec:lattice}

Next, we introduce the sampling points used in our approximation algorithm.
\begin{definition}[Rank-1 lattice point set]
    For a given number of points $N\in \NN$ and a generating vector $\bsg=(g_1,\ldots,g_d)\in \{1,\ldots,N-1\}^d$, the rank-1 lattice point set $P_{N,\bsg}$ is defined by
    \[ P_{N,\bsg}\coloneqq\left\{ \bsy_n=\left(\left\{ \frac{ng_1}{N}\right\},\ldots,\left\{ \frac{ng_d}{N}\right\}\right) \,\mid \, 0\leq n<N\right\}.\]
\end{definition}

The properties of lattice rules are closely related to the following set of vectors in the frequency domain.

\begin{definition}[Dual lattice]
    The dual lattice $P_{N,\bsg}^\perp$ for a rank-1 lattice point set with generating vector $\bsg$ and size $N$ is defined as
    \[ P_{N,\bsg}^\perp\coloneqq\left\{\bsk \in \ZZ^{d} \mid \bsk \cdot \bsg \equiv 0 \pmod N\right\}. \]
\end{definition}

The following lemma describes the well-known character property of lattice point sets, which is one of the fundamental equalities for the error analysis of QMC rules.

\begin{lemma}[Character property]\label{lem:character}
    Let $P_{N,\bsg}=\{\bsy_0,\ldots,\bsy_{N-1}\}$ be a rank-1 lattice point set. For any $\bsk\in \ZZ^d$, it holds that
    \[ \frac{1}{N}\sum_{n=0}^{N-1} e^{2\pi i \bsk\cdot \bsy_n}= \mathds{1}_{\bsk \in P_{N, \bsg}^{\perp}}, \]
    where $\mathds{1}_{\cdot}$ denotes the indicator function.
\end{lemma}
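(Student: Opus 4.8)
The plan is to reduce the exponential sum to an elementary geometric series by exploiting the structure of the lattice points. First I would unfold the definition of the points: since $\bsy_n=(\{ng_1/N\},\ldots,\{ng_d/N\})$, the inner product reads $\bsk\cdot\bsy_n=\sum_{j=1}^d k_j\{ng_j/N\}$. The crucial first step is to observe that the fractional parts can be removed without changing the value of the exponential. Indeed, writing $\{ng_j/N\}=ng_j/N-\lfloor ng_j/N\rfloor$, and noting that $k_j\lfloor ng_j/N\rfloor\in\ZZ$, we have $e^{2\pi i k_j\{ng_j/N\}}=e^{2\pi i k_j ng_j/N}$. Multiplying over $j$ gives the clean identity
\[
e^{2\pi i \bsk\cdot\bsy_n}=e^{2\pi i n(\bsk\cdot\bsg)/N}.
\]

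With this in hand, the sum collapses to a one-dimensional geometric sum in $n$ with common ratio $\omega\coloneqq e^{2\pi i(\bsk\cdot\bsg)/N}$, namely $\frac{1}{N}\sum_{n=0}^{N-1}\omega^n$. I would then split into two cases according to whether $\bsk$ lies in the dual lattice. If $\bsk\cdot\bsg\equiv 0\pmod N$, then $\omega=1$, every summand equals $1$, and the normalized sum equals $1$; this is precisely the value $\mathds{1}_{\bsk\in P_{N,\bsg}^\perp}=1$. Otherwise $\omega\neq 1$, and the closed form of the geometric series gives $\frac{1}{N}\cdot\frac{\omega^N-1}{\omega-1}$; since $\omega^N=e^{2\pi i(\bsk\cdot\bsg)}=1$ because $\bsk\cdot\bsg\in\ZZ$, the numerator vanishes and the sum is $0$, matching $\mathds{1}_{\bsk\in P_{N,\bsg}^\perp}=0$.

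This is an elementary and standard computation, so I do not anticipate a genuine obstacle. The only point requiring minor care is the first step: one must be explicit that it is the integrality of the Fourier index $k_j$ (not of $ng_j/N$ itself) that allows the floor terms to disappear from the exponential. Everything else is the classical evaluation of a sum of $N$-th roots of unity.
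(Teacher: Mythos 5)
Your proof is correct and complete: the removal of the fractional parts via the integrality of $k_j$, the reduction to the geometric sum $\frac{1}{N}\sum_{n=0}^{N-1}\omega^n$ with $\omega=e^{2\pi i(\bsk\cdot\bsg)/N}$, and the two-case evaluation exactly establish the stated identity. Note that the paper itself offers no proof of this lemma—it is cited as a well-known character property—so there is nothing to compare against; your argument is precisely the standard one that the paper implicitly relies on.
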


\subsection{Weighted hyperbolic cross}\label{subsec:hyperbolic_cross}

For a function $f \in H_{d,\alpha,\bsgamma}$, we consider the truncated Fourier expansion:
\begin{align} \label{eq:truncated_Fourier}
f_{\Ical}(\bsx) = \sum_{\bsk\in \Ical}\widehat{f}(\bsk)\, e^{2\pi i\bsk\cdot \bsx},
\end{align}
where $\Ical\subset\ZZ^d$ is a finite index set. Then, the truncated Fourier series approach \cite{KSW06,KWW09,K19,KV19} for approximating $f$ employs a suitably chosen quadrature rule to estimate the Fourier coefficients $\widehat{f}(\bsk)$ for all $\bsk\in \Ical$.

Here, it is clear that the index set $\Ical$ must be carefully chosen. On the one hand, we aim to keep its cardinality $|\Ical|$ small to minimize computational costs. On the other hand, to ensure a fast convergence rate of the error, $\Ical$ should include indices $\bsk$ that correspond to the largest Fourier coefficients. In view of these requirements, we define $\Ical$ by a weighted hyperbolic cross of the form:
\begin{align}\label{eq:weight_hyper}
\Acal_d\coloneqq \Acal_{d,\alpha,\bsgamma}(M)=\left\{\bsk \in \mathbb{Z}^d \mid r_{\alpha,\bsgamma} (\bsk)\leq M  \right\},
\end{align}
for some $M>0$, where $\alpha>1/2$, $d \in \NN$, and $\bsgamma=(\gamma_u)_{u\subset \NN, |u|<\infty}$ is a collection of non-negative weights. Throughout this paper, we assume $\Ical = \Acal_d$ unless otherwise stated. We refer to the positive constant $M$ as the \emph{radius} and $|\Acal_d|$ as the \emph{size} of the hyperbolic cross.

The relationship between the radius $M$ and the size $|\Acal_d|$ is shown in the following lemma. While similar results restricted to product weights are available, for instance, in \cite[Lemma~13.1]{DKP22}, the following statement for general weights is given in \cite{CKNS20}. We provide a brief proof here.

\begin{lemma}\label{lem:size_hyperbolic_cross}
    For $\alpha > 1/2$, $d \in \NN$, a collection of non-negative weights $\bsgamma=(\gamma_u)_{u\subset \NN, |u|<\infty}$, and $M > 0$, let $\Acal_d$ be defined as in \eqref{eq:weight_hyper}. Then, for any $\lambda > 1/\alpha$, we have
    \begin{align*}
        |\Acal_d| \leq M^{\lambda} \sum_{u\subseteq [1{:}d]}\gamma_u^{\lambda}(2\zeta(\alpha\lambda))^{|u|},
    \end{align*}
    where $\zeta(s)=\sum_{k=1}^{\infty}k^{-s}$ denotes the Riemann zeta function.
\end{lemma}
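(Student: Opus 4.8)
The plan is to convert the cardinality, which is a sum of indicator functions, into a convergent series by a standard majorization argument. The key observation is that for every $\bsk \in \Acal_d$ we have $r_{\alpha,\bsgamma}(\bsk) \le M$, hence $M/r_{\alpha,\bsgamma}(\bsk) \ge 1$, so for any exponent $\lambda > 0$ the trivial bound $\mathds{1}_{r_{\alpha,\bsgamma}(\bsk)\le M} \le (M/r_{\alpha,\bsgamma}(\bsk))^{\lambda}$ holds. (Indices $\bsk$ whose support $u$ has $\gamma_u = 0$ are excluded from $\Acal_d$ by the convention that $r_{\alpha,\bsgamma}(\bsk)=\infty$ in that case, so they contribute nothing.) First I would write
\[
|\Acal_d| = \sum_{\bsk \in \ZZ^d} \mathds{1}_{r_{\alpha,\bsgamma}(\bsk)\le M}
\le M^{\lambda} \sum_{\substack{\bsk \in \ZZ^d \\ \gamma_{\mathrm{supp}(\bsk)}>0}} \frac{1}{r_{\alpha,\bsgamma}(\bsk)^{\lambda}},
\]
where I have dropped the constraint $r_{\alpha,\bsgamma}(\bsk)\le M$ and extended the sum to all admissible $\bsk$, which is permissible since every summand is non-negative.

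Next I would reorganize the sum according to the support $u = \mathrm{supp}(\bsk)$. Since $r_{\alpha,\bsgamma}(\bsk) = \gamma_u^{-1}\prod_{j\in u}|k_j|^{\alpha}$, we have $r_{\alpha,\bsgamma}(\bsk)^{-\lambda} = \gamma_u^{\lambda}\prod_{j\in u}|k_j|^{-\alpha\lambda}$, and grouping the frequencies by their support gives
\[
\sum_{\substack{\bsk \in \ZZ^d \\ \gamma_{\mathrm{supp}(\bsk)}>0}} \frac{1}{r_{\alpha,\bsgamma}(\bsk)^{\lambda}}
= \sum_{u \subseteq [1{:}d]} \gamma_u^{\lambda} \sum_{\substack{\bsk \in \ZZ^d \\ \mathrm{supp}(\bsk)=u}} \prod_{j\in u} \frac{1}{|k_j|^{\alpha\lambda}}.
\]
For a fixed $u$, the inner sum ranges over $\bsk$ with $k_j \neq 0$ for $j \in u$ and $k_j = 0$ for $j \notin u$, so it factorizes as a product over $j \in u$ of the one-dimensional sums $\sum_{k_j \in \ZZ\setminus\{0\}} |k_j|^{-\alpha\lambda} = 2\zeta(\alpha\lambda)$, yielding the factor $(2\zeta(\alpha\lambda))^{|u|}$ (with the empty product equal to $1$ for $u = \emptyset$). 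Combining the three displays gives the claimed bound
\[
|\Acal_d| \le M^{\lambda} \sum_{u \subseteq [1{:}d]} \gamma_u^{\lambda} (2\zeta(\alpha\lambda))^{|u|}.
\]

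The only point requiring care, and what I regard as the crux, is the convergence of the one-dimensional series $\sum_{k\in\ZZ\setminus\{0\}}|k|^{-\alpha\lambda}$: this is finite precisely when $\alpha\lambda > 1$, which is exactly the hypothesis $\lambda > 1/\alpha$. Thus the restriction on $\lambda$ is not merely technical but is what makes the majorizing series summable; for $\lambda \le 1/\alpha$ the right-hand side would diverge and the argument would fail. Everything else is a routine rearrangement of absolutely convergent non-negative series, so no further subtlety arises.
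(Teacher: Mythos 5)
Your proposal is correct and follows essentially the same route as the paper's own proof: majorize the indicator $\mathds{1}_{r_{\alpha,\bsgamma}(\bsk)\le M}$ by $(M/r_{\alpha,\bsgamma}(\bsk))^{\lambda}$, regroup the resulting sum over $\bsk\in\ZZ^d$ by the support $u=\mathrm{supp}(\bsk)$, and factorize each inner sum into one-dimensional series equal to $2\zeta(\alpha\lambda)$, with $\lambda>1/\alpha$ guaranteeing convergence. The only (harmless) difference is that you spell out the treatment of supports with $\gamma_u=0$, which the paper leaves implicit.
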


\begin{proof}
By the definition of $\Acal_d$ and the fact that $\mathds{1}_{x\leq y}\le (y/x)^{\lambda}$ for any $x,y,\lambda > 0$, we have
\begin{align*}
    |\Acal_d| & = \sum_{\bsk\in \ZZ^d}\mathds{1}_{r_{\alpha,\bsgamma} (\bsk)\leq M} \le \sum_{\bsk\in \ZZ^d}\left( \frac{M}{r_{\alpha,\bsgamma} (\bsk)}\right)^{\lambda}\\
    & = M^{\lambda}\sum_{u\subseteq [1{:}d]}\sum_{\bsk_u\in (\ZZ\setminus \{0\})^{|u|}}\frac{1}{r^{\lambda}_{\alpha,\bsgamma} (\bsk_u,\bszero)}\\
    & = M^{\lambda}\sum_{u\subseteq [1{:}d]}\gamma_u^{\lambda}\left( \sum_{k\in \ZZ\setminus \{0\}}\frac{1}{|k|^{\alpha \lambda}}\right)^{|u|} = M^{\lambda} \sum_{u\subseteq [1{:}d]}\gamma_u^{\lambda}(2\zeta(\alpha\lambda))^{|u|},
\end{align*}
where the condition $\alpha \lambda > 1$ ensures the absolute convergence of the Riemann zeta function.
\end{proof}

Additionally, we prove the following auxiliary result related to the weighted hyperbolic cross $\Acal_d$. While similar estimates exist in the literature (e.g., \cite[Lemma~14.2]{DKP22}), we provide a proof for the sake of completeness, extending the result to the case of general weights.
\begin{lemma}\label{lem:sum_weights_hyperbolic_cross}
    For $\alpha > 1/2$, $d \in \NN$, a collection of non-negative weights $\bsgamma=(\gamma_u)_{u\subset \NN, |u|<\infty}$, and $M \ge 1$, let $\Acal_d$ be defined as in \eqref{eq:weight_hyper}. Then, for any $\lambda \in (1/\alpha,2)$, we have
    \[ \sum_{\bsk\notin \Acal_d}\frac{1}{r_{\alpha,\bsgamma}^2(\bsk)} \le \frac{1}{M^{2-\lambda}}\,\frac{8(3-\lambda)}{2-\lambda}\sum_{u\subseteq [1{:}d]}\gamma_u^{\lambda}(2\zeta(\alpha\lambda))^{|u|}. \]
\end{lemma}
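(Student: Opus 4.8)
The plan is to sidestep any delicate layered estimate and instead reduce the tail sum directly to the computation already carried out in the proof of Lemma~\ref{lem:size_hyperbolic_cross}. The starting observation is that, by the definition \eqref{eq:weight_hyper}, the condition $\bsk\notin\Acal_d$ is exactly $r_{\alpha,\bsgamma}(\bsk)>M$, so the whole sum ranges over frequencies on which $r_{\alpha,\bsgamma}$ is large.

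First I would split the summand by writing, for every $\bsk\notin\Acal_d$,
\[
\frac{1}{r_{\alpha,\bsgamma}^2(\bsk)}=\frac{1}{r_{\alpha,\bsgamma}^{2-\lambda}(\bsk)}\cdot\frac{1}{r_{\alpha,\bsgamma}^{\lambda}(\bsk)}.
\]
Since $\lambda<2$, the exponent $2-\lambda$ is positive, so $r_{\alpha,\bsgamma}(\bsk)>M$ forces $r_{\alpha,\bsgamma}^{2-\lambda}(\bsk)>M^{2-\lambda}$, and the first factor is bounded by $M^{\lambda-2}$. Pulling this out and enlarging the index set from $\{\bsk\notin\Acal_d\}$ to all of $\ZZ^d$ (legitimate as every summand is non-negative) gives
\[
\sum_{\bsk\notin\Acal_d}\frac{1}{r_{\alpha,\bsgamma}^2(\bsk)}\le\frac{1}{M^{2-\lambda}}\sum_{\bsk\in\ZZ^d}\frac{1}{r_{\alpha,\bsgamma}^{\lambda}(\bsk)}.
\]
The remaining sum is precisely the quantity evaluated in the proof of Lemma~\ref{lem:size_hyperbolic_cross}: splitting over $u=\mathrm{supp}(\bsk)$ and using the product form of $r_{\alpha,\bsgamma}$, it factorizes as $\sum_{u\subseteq[1{:}d]}\gamma_u^{\lambda}(2\zeta(\alpha\lambda))^{|u|}$, where the hypothesis $\lambda>1/\alpha$ (equivalently $\alpha\lambda>1$) guarantees absolute convergence of each one-dimensional zeta factor. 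This yields the asserted bound with constant $1$ in place of $8(3-\lambda)/(2-\lambda)$; since $22-7\lambda>0$, i.e. $8(3-\lambda)>2-\lambda$, for all $\lambda<2$, the stated inequality follows a fortiori.

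There is essentially no hard step: the only things to monitor are the positivity of the exponent $2-\lambda$ (where $\lambda<2$ enters) and the convergence of the $\lambda$-power sum (where $\lambda>1/\alpha$ enters), which are exactly the two endpoints of the admissible interval. If instead one wishes to reproduce a constant of the exact shape $8(3-\lambda)/(2-\lambda)$—for instance to mirror the layered argument of \cite[Lemma~14.2]{DKP22}—I would decompose the tail into dyadic shells $\{2^{j}M<r_{\alpha,\bsgamma}(\bsk)\le 2^{j+1}M\}$ for $j\ge 0$, bound $r_{\alpha,\bsgamma}^{-2}$ by $(2^{j}M)^{-2}$ on each shell, count the shell via Lemma~\ref{lem:size_hyperbolic_cross} as at most $(2^{j+1}M)^{\lambda}\sum_{u}\gamma_u^{\lambda}(2\zeta(\alpha\lambda))^{|u|}$, and sum the geometric series $\sum_{j\ge0}2^{j(\lambda-2)}=(1-2^{\lambda-2})^{-1}$; the only mild bookkeeping is then to verify that the resulting prefactor $2^{\lambda}/(1-2^{\lambda-2})$ is dominated by $8(3-\lambda)/(2-\lambda)$ on $(1/\alpha,2)$. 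I would nonetheless prefer the first argument, since it is shorter, requires only $M>0$, and produces a sharper constant.
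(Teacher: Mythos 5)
Your proof is correct, and it takes a genuinely different (and simpler) route than the paper's. The paper partitions the tail $\{\bsk\notin\Acal_d\}$ into layers $\ell\le r_{\alpha,\bsgamma}(\bsk)<\ell+1$ for $\ell\ge\lfloor M\rfloor$, performs an Abel-summation-type rearrangement, applies Lemma~\ref{lem:size_hyperbolic_cross} to each layer, and closes with an integral comparison of $\sum_{\ell\ge\lfloor M\rfloor}(\ell+1)^{-(3-\lambda)}$; this layered argument is where both the hypothesis $M\ge 1$ and the constant $8(3-\lambda)/(2-\lambda)$ originate. You replace all of this with the pointwise factorization $r_{\alpha,\bsgamma}^{-2}(\bsk)=r_{\alpha,\bsgamma}^{-(2-\lambda)}(\bsk)\cdot r_{\alpha,\bsgamma}^{-\lambda}(\bsk)$, bound the first factor by $M^{-(2-\lambda)}$ on the tail (this is where $\lambda<2$ enters), and extend the sum of $r_{\alpha,\bsgamma}^{-\lambda}$ to all of $\ZZ^d$, where it evaluates exactly to $\sum_{u\subseteq[1{:}d]}\gamma_u^{\lambda}(2\zeta(\alpha\lambda))^{|u|}$ by the same support-splitting computation that appears inside the proof of Lemma~\ref{lem:size_hyperbolic_cross} (this is where $\alpha\lambda>1$ enters). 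Your closing comparison is also right: $8(3-\lambda)/(2-\lambda)\ge 8>1$ on $(0,2)$ since $3-\lambda\ge 2-\lambda$, so the stated inequality follows a fortiori. What your argument buys is a strictly stronger result: the constant improves from $8(3-\lambda)/(2-\lambda)$ to $1$, the hypothesis $M\ge 1$ relaxes to $M>0$, and the proof is a few lines instead of two displays of bookkeeping; the paper's layered approach is what one would do if Lemma~\ref{lem:size_hyperbolic_cross} were only available as a black-box cardinality bound, whereas you reuse its internal zeta-function computation directly. Your optional dyadic-shell variant (with prefactor $2^{\lambda}/(1-2^{\lambda-2})$, which is indeed dominated by $8(3-\lambda)/(2-\lambda)$ on the relevant range) is also sound, but unnecessary given the direct argument.
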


\begin{proof}
    By definition, any element $\bsk\notin \Acal_d$ satisfies $r_{\alpha,\bsgamma}(\bsk)>M$. By partitioning the elements $\bsk\notin \Acal_d$ according to the magnitude of $r_{\alpha,\bsgamma}(\bsk)$, we have
\begin{align*}
    \sum_{\bsk\notin \Acal_d}\frac{1}{r_{\alpha,\bsgamma}^2(\bsk)} & \le  \sum_{\ell=\lfloor M \rfloor}^{\infty}\sum_{\substack{\bsk\in \ZZ^d\\ \ell\le r_{\alpha,\bsgamma}(\bsk)<\ell+1}}\frac{1}{r_{\alpha,\bsgamma}^2(\bsk)} \\
    & \le \sum_{\ell=\lfloor M\rfloor}^{\infty}\frac{1}{\ell^2}\left(\sum_{\bsk\in \ZZ^d}\mathds{1}_{r_{\alpha,\bsgamma}(\bsk)<\ell+1}-\sum_{\bsk\in \ZZ^d}\mathds{1}_{r_{\alpha,\bsgamma}(\bsk)<\ell} \right)\\
    & \le \sum_{\ell=\lfloor M\rfloor}^{\infty}\left(\frac{1}{\ell^2}-\frac{1}{(\ell+1)^2}\right) \sum_{\bsk\in \ZZ^d}\mathds{1}_{r_{\alpha,\bsgamma}(\bsk)<\ell+1}\\
    & \le 8\sum_{\ell=\lfloor M\rfloor}^{\infty}\frac{1}{(\ell+1)^3} \sum_{\bsk\in \ZZ^d}\mathds{1}_{r_{\alpha,\bsgamma}(\bsk)<\ell+1}.
\end{align*}

Regarding the sum over $\bsk$ above, it follows from Lemma~\ref{lem:size_hyperbolic_cross} that
\[ \sum_{\bsk\in \ZZ^d}\mathds{1}_{r_{\alpha,\bsgamma}(\bsk)<\ell+1} \le |\Acal_{d,\alpha,\bsgamma}(\ell+1)| \le (\ell+1)^{\lambda}\sum_{u\subseteq [1{:}d]}\gamma_u^{\lambda}(2\zeta(\alpha\lambda))^{|u|},\]
for any $\ell\in \NN$ and $\lambda>1/\alpha$. Substituting this into the previous inequality, and assuming $\lambda<2$, we get
\begin{align*}
    \sum_{\bsk\notin \Acal_d}\frac{1}{r_{\alpha,\bsgamma}^2(\bsk)} & \le 8\sum_{u\subseteq [1{:}d]}\gamma_u^{\lambda}(2\zeta(\alpha\lambda))^{|u|}\sum_{\ell=\lfloor M\rfloor}^{\infty}\frac{1}{(\ell+1)^{3-\lambda}} \\
    & \le 8\sum_{u\subseteq [1{:}d]}\gamma_u^{\lambda}(2\zeta(\alpha\lambda))^{|u|}\left( \frac{1}{(\lfloor M\rfloor+1)^{3-\lambda}}+\int_{\lfloor M\rfloor}^{\infty}\frac{1}{(x+1)^{3-\lambda}}\rd x\right)\\
    & = 8\sum_{u\subseteq [1{:}d]}\gamma_u^{\lambda}(2\zeta(\alpha\lambda))^{|u|}\left( \frac{1}{(\lfloor M\rfloor+1)^{3-\lambda}}+\frac{1}{2-\lambda}\frac{1}{(\lfloor M\rfloor+1)^{2-\lambda}}\right)\\
    & \le \frac{1}{M^{2-\lambda}}\, \frac{8(3-\lambda)}{2-\lambda}\sum_{u\subseteq [1{:}d]}\gamma_u^{\lambda}(2\zeta(\alpha\lambda))^{|u|},
\end{align*}
which completes the proof.
\end{proof}

\section{Multiple rank-1 lattice-based algorithms}\label{sec:algorithm}

In this section, we start with the single lattice-based algorithm from \cite{KSW06,KWW09}, and then formally introduce the multiple lattice-based algorithm from \cite{K19,KV19}. 

\subsection{Single lattice-based algorithms}

The single lattice-based approach approximates $f \in H_{d,\alpha,\bsgamma}$ by combining the truncated Fourier series on the index set $\Acal_d$ with the discretization of the coefficients $\widehat{f}(\bsk)$ using a rank-1 lattice $P_{N,\bsg}=\{\bsy_n\mid 0\le n<N\}$. For any $\bsk\in \Acal_d$, the discrete Fourier coefficient $\widehat{f}_{N}(\bsk)$ is computed as
\begin{align}\label{eq:aliasing_formula}
\widehat{f}_{N}(\bsk) = \frac{1}{N}\sum_{n=0}^{N-1}f(\bsy_n) \, e^{-2\pi i\bsk\cdot \bsy_n}= \widehat{f}(\bsk) + \sum_{\bsl \in P_{N, \bsg}^\perp \setminus \{ \bszero \}} \widehat{f}(\bsl + \bsk),
\end{align}
where the second equality follows from the character property of the lattice (cf.~Lemma~\ref{lem:character}).
The resulting approximation operator $A^{\text{sing}}_N$ is defined by
\begin{align*}
A_N^{\text{sing}}(f)(\bsx) \coloneqq \sum_{\bsk\in \Acal_d}\widehat{f}_N(\bsk)\, e^{2\pi i\bsk\cdot \bsx}.
\end{align*}

The fundamental limitation of this approach lies in the aliasing issue. For any $\bsk\in \ZZ^d$, it follows from \eqref{eq:aliasing_formula} that
\[ \widehat{f}_N(\bsk-\bsl)=\widehat{f}_N(\bsk),\]
whenever $\bsl\in P^{\perp}_{N,\bsg}$. This implies that the frequency component $\bsk$ is indistinguishable from any other component $\bsk-\bsl$ shifted by a dual lattice vector, based solely on the function values at the lattice points. To uniquely identify the coefficients on the index set $\Acal_d$, the dual lattice $P_{N, \bsg}^\perp$ must avoid the non-zero elements of the ``difference set'' $\Acal_d - \Acal_d = \{\bsk-\bsl \mid \bsk,\bsl\in \Acal_d\}$. While this can be achieved if $N$ is sufficiently large relative to $M$, a smaller $M$ inevitably leads to a larger truncation error, as the coefficients $\widehat{f}(\bsk)$ for $\bsk\notin \Acal_d$ are ignored in the approximation $A_N^{\text{sing}}$. This inherent trade-off results in a non-optimal convergence rate for the single lattice-based algorithm in both the $L_2$ and $L_{\infty}$ norms. A more detailed discussion on the classical arguments for these lower bounds that explain the non-optimality of the single lattice-based approach is provided in Appendix~\ref{appendix:b}.

\subsection{Multiple lattice-based algorithms}

To overcome the aliasing issues inherent in the single lattice-based approach for large hyperbolic crosses, a reconstruction strategy using a union of rank-1 lattices was proposed in \cite{K18, KV19}.

The multiple lattice-based algorithm approximates the Fourier coefficients in the truncated expansion \eqref{eq:truncated_Fourier} (with $\Ical=\Acal_d$) by aggregating information from $L$ separate rank-1 lattice point sets $P_{n_1,\bsg_1}, \dots, P_{n_L,\bsg_L}$. The total number of sampling points is $N = \sum_{t=1}^L n_t$. In practice, the sizes $n_t$ are typically chosen to be roughly equal to balance the computational load across the lattices.

The core of this approach is a coverage strategy: we partition $\Acal_d$ based on aliasing properties. Specifically, we first identify a subset $\Acal_{d,1} \subset \Acal_d$ whose coefficients can be approximated by the first lattice $P_{n_1, \bsg_1}$ without aliasing; that is, for any $\bsk\in \Acal_{d,1}$ and $\bsl\in P^{\perp}_{n_1,\bsg_1}\setminus \{\bszero\}$, we have $\bsk+\bsl\notin \Acal_d$. For the remaining indices $\Acal_d \setminus \Acal_{d,1}$, we employ a second lattice $P_{n_2, \bsg_2}$, and repeat this process until every frequency in $\Acal_d$ is covered by at least one lattice that is ``aliasing-free'' for that specific index. Ideally, the number of lattices $L$ should satisfy $L \approx \log |\Acal_d|$, a property shown to be achievable in \cite{KV19}. Following their results, we provide a specific probabilistic construction for these lattices in Appendix~\ref{appendix:a}. Furthermore, in Corollary~\ref{cor:number_sampling}, we provide an upper bound on the total number of points $N$ required to cover $\Acal_d$ while maintaining this aliasing-free property.

To formally analyze the aliasing behavior within each lattice $P_{n_t, \bsg_t}$, we introduce the concept of a \emph{fiber}, which collects all frequencies in $\Acal_d$ that are aliased under a given lattice rule.

\begin{definition}[Fiber]
Let $\Acal_d$ be a finite frequency set and $P_{n, \bsg}$ be a rank-1 lattice point set. For any $\bsk\in \ZZ^d$, the fiber of $\bsk$ with respect to $\Acal_d$ and $P_{n,\bsg}$ is defined as
\begin{align}\label{eq:fiber}
\Fcal_{\Acal_d, n}(\bsk, \bsg) \coloneqq \left\{ \bsm \in \Acal_d \mid \bsm \cdot \bsg \equiv \bsk \cdot \bsg \pmod{n} \right\}.
\end{align}
\end{definition}

A frequency $\bsk \in \Acal_d$ is said to be \emph{aliasing-free} with respect to $P_{n, \bsg}$ if $|\Fcal_{\Acal_d, n}(\bsk, \bsg)| = 1$. To aggregate the information from $L$ lattices, for each $t\in \{1,\ldots,L\}$ and $\bsk\in \Acal_d$, we define the indicator function $U_t(\bsk)$ by
\[ U_t(\bsk)=\begin{cases}
1 & \text{if $|\Fcal_{\Acal_d, n_t}(\bsk, \bsg_t)| = 1$,}\\
0 & \text{otherwise.}
\end{cases}\]
Let $\xi(\bsk) \coloneqq \sum_{t=1}^L U_t(\bsk)$ be the number of available aliasing-free lattices for $\bsk$. The construction algorithm (detailed in Appendix~\ref{appendix:a}) ensures that $\xi(\bsk) \ge 1$ for all $\bsk \in \Acal_d$ while keeping $L$ small.

Verifying the indicator function $U_t(\bsk)$ is equivalent to checking the following condition:
\begin{align*}
\bsk\cdot\bsg_t\not\equiv \bsm\cdot\bsg_t \pmod{n_t}\quad \text{for all $\bsm\in \Acal_d\setminus\{\bsk \}$}.
\end{align*}
To evaluate $U_t(\bsk)$ efficiently for all $t\in \{1,\ldots,L\}$ and $\bsk\in \Acal_d$, we can employ a simple counting (or binning) strategy. For a fixed $t$, note that the values $\nu_{\bsk,t}\coloneqq \bsk\cdot \bsg_t\pmod {n_t}$ are integers in $\{0,1,\ldots,n_t-1\}$. We first allocate an integer array of length $n_t$ initialized to all zeros. For each $\bsk\in \Acal_d$, we compute $\nu_{\bsk,t}$ (which requires $\Ocal(d)$ operations per frequency) and increment the corresponding entry in the array. This step takes $\Ocal(d|\Acal_d|)$ operations. Then, a second pass over $\Acal_d$ allows us to determine $U_t(\bsk)$ in $\Ocal(|\Acal_d|)$ time: $U_t(\bsk)=1$ if the array entry at index $\nu_{\bsk,t}$ is exactly $1$, and $0$ otherwise. Thus, the cost for a single lattice is $\Ocal(d|\Acal_d|+n_t)$. Summing this over all $L$ lattices, the total computational complexity becomes
\[ \Ocal\left(dL|\Acal_d|+\sum_{t=1}^{L}n_t\right)=\Ocal(dL|\Acal_d|+N).\]
Since $N$ is bounded by $2cL_{\max}(|\Acal_d|-1)$ according to Proposition~\ref{prop:complexity} and $L_{\max}$ is chosen as in Algorithm~\ref{Algorithm_2}, the overall complexity simplifies to $\Ocal(dL_{\max}|\Acal_d|)=\Ocal(d|\Acal_d|\log |\Acal_d|)$. This efficiency ensures that the precomputation of the aliasing-free index sets remains feasible even in high dimensions.

The multiple lattice-based approximation $A_N^{\mathrm{mult}}(f)$ is then defined as:
\begin{align}\label{eq:mult_algorithm}
A_N^{\mathrm{mult}}(f)(\bsx)=\sum_{\bsk\in\Acal_d} \widehat{f}^{\mathrm{mult}}_{N}(\bsk)\, e^{2\pi i\bsk\cdot \bsx} ,
\end{align}
where the discrete Fourier coefficients $\widehat{f}^{\mathrm{mult}}_{N}(\bsk)$ are computed by averaging the aliasing-free results:
\begin{align*}
\widehat{f}_N^{\mathrm{mult}}(\bsk) \coloneqq \frac{1}{\xi(\bsk)}\sum_{t=1}^L \frac{U_{t}(\bsk)}{n_{t}}\sum_{i=0}^{n_t-1}f(\bsy^{(t)}_i) \, e^{-2\pi i\bsk\cdot \bsy^{(t)}_i}.
\end{align*}
Here, $\{\bsy^{(t)}_0,\ldots,\bsy^{(t)}_{n_t-1}\}$ denotes the $t$-th rank-1 lattice point set for $t\in \{1,\ldots,L\}$.

Furthermore, we consider a randomized version of this algorithm. Let $\bsDelta\in [0,1]^d$ be a random shift chosen uniformly at random. The randomized multiple lattice-based algorithm is similarly defined as
 \begin{align}\label{eq:randomized_algorithm}
A^{\mathrm{mult}}_{N,\bsDelta}(f)(\bsx)=\sum_{\bsk\in\Acal_d} \widehat{f}^{\mathrm{mult}}_{N,\bsDelta}(\bsk) \, e^{2\pi i\bsk\cdot \bsx},
 \end{align}
where the discrete Fourier coefficients $\widehat{f}_{N, \bsDelta}^{\mathrm{mult}}(\bsk)$ are computed using the shifted sampling points:
\begin{align*}
\widehat{f}_{N,\bsDelta}^{\mathrm{mult}}(\bsk)\coloneqq \frac{1}{\xi(\bsk)}\sum_{t=1}^L \frac{U_{t}(\bsk)}{n_{t}}\sum_{i=0}^{n_t-1}f(\bsy^{(t)}_{i,\bsDelta}) \, e^{-2\pi i\bsk\cdot \bsy^{(t)}_{i,\bsDelta}}.
\end{align*}
Here, $\bsy^{(t)}_{i,\bsDelta} \coloneqq \{\bsy^{(t)}_{i}+\bsDelta\}$ denotes the shifted lattice point, where the fractional part is taken componentwise.

\section{Error analysis}\label{sec:error_analysis}

In this section, we provide a theoretical error analysis for the deterministic algorithm $A_N^{\mathrm{mult}}$ and its randomized counterpart $A^{\mathrm{mult}}_{N,\bsDelta}$. Throughout the analysis, the index set $\Acal_d$ is chosen as the weighted hyperbolic cross defined in \eqref{eq:weight_hyper}. To facilitate the analysis across multiple lattices, for each $t\in \{1,\ldots,L\}$, we define the subset of indices that are aliasing-free with respect to the $t$-th lattice as
\begin{align*}
\Acal_{d,t}\coloneqq \left\{\bsk \in \Acal_d \mid U_t(\bsk)=1 \right\} \subseteq \Acal_d.
\end{align*}

We assume that the collection of rank-1 lattices $\{P_{n_t, \bsg_t}\}_{t=1}^L$ is obtained via the construction algorithm described in Appendix~\ref{appendix:a} (Algorithm~\ref{Algorithm_2}), which ensures that $\xi(\bsk) \ge 1$ for all $\bsk \in \Acal_d$ with probability $1-\delta$, where $\delta\in (0,1)$ is one of the input parameters in Algorithm~\ref{Algorithm_2}. Our error bounds rely on the following fundamental property of these aliasing-free sets and their relation to the dual lattice (cf.~\cite[Proof of Lemma~3.1]{KV19} and \cite[Proof of Lemma~15.6]{DKP22}).

\begin{lemma}\label{lem:multiple_rank-1_property}
Let $\Acal_d$ be the hyperbolic cross defined in \eqref{eq:weight_hyper} with radius $M>0$, and let $\{P_{n_t, \bsg_t}\}_{t=1}^L$ be the multiple rank-1 lattice point sets obtained by Algorithm~\ref{Algorithm_2}. For any $t\in \{1,\ldots,L\}$ and any non-zero vector $\bsl\in P_{n_t,\bsg_t}^{\perp}\setminus\{ \bszero \}$, the following properties hold:
\begin{enumerate}
    \item For any $\bsk\in \Acal_{d,t}$, we have
        \begin{align*}
            \bsk+\bsl\notin \Acal_d.
        \end{align*}
    \item The sets
        \begin{align*}
            \{\bsk+\bsl \mid \bsl\in   P_{n_t,\bsg_t}^{\perp}\setminus\{ \bszero \} \}
        \end{align*}
        for distinct $\bsk\in \Acal_{d,t}$ are pairwise disjoint.
\end{enumerate}
\end{lemma}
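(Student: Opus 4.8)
The plan is to derive both properties directly from the definition of the aliasing-free indicator $U_t$ together with the single structural fact that the dual lattice $P_{n_t,\bsg_t}^{\perp}=\{\bsl\in\ZZ^d\mid \bsl\cdot\bsg_t\equiv 0\pmod{n_t}\}$ is an additive subgroup of $\ZZ^d$ (closed under addition and subtraction). I would first restate the hypothesis in the form most convenient for the argument: for $\bsk\in\Acal_{d,t}$ the condition $U_t(\bsk)=1$ means $|\Fcal_{\Acal_d,n_t}(\bsk,\bsg_t)|=1$, i.e.\ $\bsk$ is the \emph{unique} element $\bsm\in\Acal_d$ satisfying $\bsm\cdot\bsg_t\equiv\bsk\cdot\bsg_t\pmod{n_t}$. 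I would also record that $\bsk$ itself always lies in its own fiber, which is legitimate precisely because $\Acal_{d,t}\subseteq\Acal_d$, so $\bsk\in\Acal_d$.

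For the first property I would argue by contradiction. Suppose $\bsk\in\Acal_{d,t}$, $\bsl\in P_{n_t,\bsg_t}^{\perp}\setminus\{\bszero\}$, and $\bsk+\bsl\in\Acal_d$. Since $\bsl\cdot\bsg_t\equiv 0\pmod{n_t}$, we get $(\bsk+\bsl)\cdot\bsg_t\equiv\bsk\cdot\bsg_t\pmod{n_t}$, so $\bsk+\bsl\in\Fcal_{\Acal_d,n_t}(\bsk,\bsg_t)$. Because $\bsl\neq\bszero$, the vectors $\bsk$ and $\bsk+\bsl$ are distinct members of the fiber, whence $|\Fcal_{\Acal_d,n_t}(\bsk,\bsg_t)|\ge 2$, contradicting $U_t(\bsk)=1$. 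Therefore $\bsk+\bsl\notin\Acal_d$.

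For the second property I would likewise suppose, for distinct $\bsk,\bsk'\in\Acal_{d,t}$, that the two shifted sets meet, say $\bsk+\bsl=\bsk'+\bsl'$ with $\bsl,\bsl'\in P_{n_t,\bsg_t}^{\perp}\setminus\{\bszero\}$. Then $\bsk-\bsk'=\bsl'-\bsl\in P_{n_t,\bsg_t}^{\perp}$ by the subgroup property, so $\bsk\cdot\bsg_t\equiv\bsk'\cdot\bsg_t\pmod{n_t}$. Since $\bsk'\in\Acal_d$, this places $\bsk'$ in $\Fcal_{\Acal_d,n_t}(\bsk,\bsg_t)$; uniqueness of the fiber element forces $\bsk'=\bsk$, contradicting $\bsk\neq\bsk'$. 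Hence the shifted sets are pairwise disjoint.

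I do not expect a genuine obstacle here: both parts are short contradiction arguments, and the only point requiring a little care is bookkeeping—ensuring that the produced fiber element is genuinely distinct from $\bsk$ (guaranteed by $\bsl\neq\bszero$ in Part~1 and by $\bsk\neq\bsk'$ in Part~2) and that $\bsk,\bsk'$ actually belong to $\Acal_d$ so that membership in the fiber is meaningful. I would also remark that neither conclusion uses any feature of Algorithm~\ref{Algorithm_2} beyond the defining property $U_t(\bsk)=1$; the reference to the construction serves only to guarantee that such aliasing-free indices cover $\Acal_d$, not to establish the two set-theoretic identities themselves.
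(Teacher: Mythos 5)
Your proof is correct and follows essentially the same route as the paper's: both parts are short contradiction arguments that place a second element in the fiber $\Fcal_{\Acal_d,n_t}(\bsk,\bsg_t)$ (via $\bsl\cdot\bsg_t\equiv 0\pmod{n_t}$ in Part~1, and via the subgroup property $\bsk-\bsk'=\bsl'-\bsl\in P_{n_t,\bsg_t}^{\perp}$ in Part~2), contradicting $|\Fcal_{\Acal_d,n_t}(\bsk,\bsg_t)|=1$. Your closing remark that only the defining property $U_t(\bsk)=1$ is used, not the specifics of Algorithm~\ref{Algorithm_2}, is an accurate observation consistent with the paper's argument.
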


\begin{proof}
To prove the first assertion, suppose there exists $\bsk\in \Acal_{d,t}$ and $\bsl\in P_{n_t,\bsg_t}^{\perp}\setminus\{ \bszero \}$, such that $\bsk'\coloneqq\bsk+\bsl\in \Acal_{d}$. Since  $\bsl\in P_{n_t,\bsg_t}^{\perp}$, it follows that $\bsl\cdot \bsg_t\equiv 0 \pmod {n_t}$, which implies $\bsk'\cdot \bsg_t\equiv \bsk\cdot \bsg_t \pmod {n_t}$. This results in $\bsk' \in \Fcal_{\Acal_d, n_t}(\bsk, \bsg_t)$. Since $\bsk' \neq \bsk$, we have $|\Fcal_{\Acal_d, n_t}(\bsk, \bsg_t)| \ge 2$, which directly contradicts the assumption that $\bsk \in \Acal_{d,t}$.

To prove the second assertion, assume there exist distinct $\bsk, \bsk' \in \Acal_{d,t}$ and elements $\bsl, \bsl' \in P_{n_t, \bsg_t}^\perp \setminus \{ \bszero \}$ such that $\bsk + \bsl = \bsk' + \bsl'$. Then we have $\bsk - \bsk' = \bsl' - \bsl\in P_{n_t, \bsg_t}^\perp\setminus \{\bszero\}$. This implies $\bsk \cdot \bsg_t \equiv \bsk' \cdot \bsg_t \pmod{n_t}$. This again yields $|\Fcal_{\Acal_d, n_t}(\bsk, \bsg_t)| \ge 2$, a contradiction.
\end{proof}

With this lemma, we are now ready to derive the worst-case error bounds. We divide our analysis into two parts: the $L_\infty$ error for the deterministic algorithm and the root mean squared $L_2$ error for the randomized algorithm.

\subsection{The worst-case \texorpdfstring{$L_{\infty}$}{L-infinity} error}\label{subsec:L_infty}

We begin by defining the worst-case error of the deterministic multiple lattice-based algorithm $A_N^{\mathrm{mult}}$ in the $L_\infty$ norm:
\[ \mathrm{err}_{\infty}(A_N^{\mathrm{mult}},H_{d,\alpha,\bsgamma}) \coloneqq \sup_{\substack{f\in H_{d,\alpha,\bsgamma}\\ \| f\|_{d,\alpha,\bsgamma}\le 1}}\|f-A_N^{\mathrm{mult}}(f)\|_{L_{\infty}}, \]
where the $L_{\infty}$ error for an individual function is simply given by
\[ \|f-A_N^{\mathrm{mult}}(f)\|_{L_{\infty}}=\sup_{\bsx\in [0,1]^d}\left|f(\bsx)-A_N^{\mathrm{mult}}(f)(\bsx)\right|.\]

As one of the main results of this paper, we prove an upper bound on the worst-case $L_{\infty}$ error $\mathrm{err}_{\infty}(A_N^{\mathrm{mult}},H_{d,\alpha,\bsgamma})$ for any fixed smoothness $\alpha>1/2$ and general weights $\bsgamma$, extending the result shown in \cite[Theorem~15.8]{DKP22}.

\begin{theorem}\label{thm:main_result_deterministic}
    For $\alpha > 1/2$, $d \in \NN$, a collection of non-negative weights $\bsgamma=(\gamma_u)_{u\subset \NN, |u|<\infty}$, and $M \ge 1$, let $\Acal_d$ be defined as in \eqref{eq:weight_hyper}. Consider the deterministic algorithm $A_N^{\mathrm{mult}}$ of the form \eqref{eq:mult_algorithm}, where the number of lattices $L$ and the lattice parameters $(n_t,\bsg_t)$ for $t=1,\ldots,L$ are obtained by Algorithm~\ref{Algorithm_2}. Provided that the parameters $c\in (1,\infty)$ and $\delta\in (0,1)$ are chosen such that the assumptions in Proposition~\ref{prop:complexity} are satisfied, then for any $\lambda\in (1/\alpha,2)$, we have
    \[ \mathrm{err}_{\infty}(A_N^{\mathrm{mult}},H_{d,\alpha,\bsgamma}) \le \frac{L+1}{M^{1-\lambda/2}} \left(\frac{8(3-\lambda)}{2-\lambda}\sum_{u\subseteq [1{:}d]}\gamma_u^{\lambda}(2\zeta(\alpha\lambda))^{|u|}\right)^{1/2}, \]
with probability $1-\delta$ (with respect to the construction of the lattices).
\end{theorem}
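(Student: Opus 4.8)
The plan is to split the error $f - A_N^{\mathrm{mult}}(f)$ into a \emph{truncation} part, arising from the Fourier modes $\bsk \notin \Acal_d$ that the algorithm discards, and an \emph{aliasing} part, arising from the discrepancy between the computed coefficients $\widehat{f}_N^{\mathrm{mult}}(\bsk)$ and the true $\widehat{f}(\bsk)$ on $\Acal_d$. Since every exponential $e^{2\pi i \bsk\cdot\bsx}$ has modulus one, the triangle inequality immediately gives the pointwise, hence $L_\infty$, bound
\[ \|f - A_N^{\mathrm{mult}}(f)\|_{L_\infty} \le \sum_{\bsk\notin \Acal_d} |\widehat{f}(\bsk)| + \sum_{\bsk\in \Acal_d} \big|\widehat{f}(\bsk) - \widehat{f}_N^{\mathrm{mult}}(\bsk)\big|. \]
My goal is to show that the aliasing sum is at most $L$ times the truncation sum, so that the right-hand side is bounded by $(L+1)\sum_{\bsk\notin\Acal_d}|\widehat{f}(\bsk)|$, and then to control this single tail sum via Cauchy--Schwarz and Lemma~\ref{lem:sum_weights_hyperbolic_cross}.

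For the aliasing part, I would first apply the single-lattice aliasing formula \eqref{eq:aliasing_formula} to each of the $L$ lattices. On the event (of probability $1-\delta$) that $\xi(\bsk)\ge 1$ for all $\bsk\in\Acal_d$, the identity $\sum_{t=1}^L U_t(\bsk) = \xi(\bsk)$ shows that the diagonal term reconstructs $\widehat{f}(\bsk)$ exactly, leaving
\[ \widehat{f}(\bsk) - \widehat{f}_N^{\mathrm{mult}}(\bsk) = -\frac{1}{\xi(\bsk)}\sum_{t=1}^L U_t(\bsk)\sum_{\bsl\in P_{n_t,\bsg_t}^\perp\setminus\{\bszero\}} \widehat{f}(\bsk+\bsl). \]
Bounding $1/\xi(\bsk)\le 1$, summing absolute values over $\bsk\in\Acal_d$, and interchanging the order of summation reduces the aliasing sum to $\sum_{t=1}^L \sum_{\bsk\in\Acal_{d,t}}\sum_{\bsl} |\widehat{f}(\bsk+\bsl)|$.

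This is where Lemma~\ref{lem:multiple_rank-1_property} does the decisive work, and I expect it to be the crux of the argument. For each fixed $t$, the first assertion guarantees that every shifted index $\bsk+\bsl$ lies outside $\Acal_d$, while the second guarantees that the shifted sets are pairwise disjoint over distinct $\bsk\in\Acal_{d,t}$. Together these say that $(\bsk,\bsl)\mapsto \bsk+\bsl$ is an injection from $\Acal_{d,t}\times(P_{n_t,\bsg_t}^\perp\setminus\{\bszero\})$ into $\ZZ^d\setminus\Acal_d$, so the inner double sum is dominated by the full tail $\sum_{\bsm\notin\Acal_d}|\widehat{f}(\bsm)|$; summing over the $L$ lattices then produces exactly the factor $L$. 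The subtle point I must handle carefully is precisely this injectivity: it is what prevents any mode outside $\Acal_d$ from being counted more than once per lattice, and hence what keeps the constant linear in $L$ rather than degrading it.

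Finally, I would estimate the common tail sum $\sum_{\bsk\notin\Acal_d}|\widehat{f}(\bsk)|$ by inserting and removing the factors $r_{\alpha,\bsgamma}(\bsk)$ and applying Cauchy--Schwarz: the factor $(\sum_{\bsk\notin\Acal_d} r_{\alpha,\bsgamma}^2(\bsk)|\widehat{f}(\bsk)|^2)^{1/2}$ is at most $\|f\|_{d,\alpha,\bsgamma}\le 1$, while the factor $(\sum_{\bsk\notin\Acal_d} r_{\alpha,\bsgamma}^{-2}(\bsk))^{1/2}$ is controlled by Lemma~\ref{lem:sum_weights_hyperbolic_cross} for any $\lambda\in(1/\alpha,2)$, yielding the $M^{-(1-\lambda/2)}$ decay together with the weight sum $\sum_{u\subseteq[1{:}d]} \gamma_u^\lambda (2\zeta(\alpha\lambda))^{|u|}$. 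Taking the supremum over the unit ball of $H_{d,\alpha,\bsgamma}$ and recalling that the entire argument rests on the probability-$(1-\delta)$ event for $\xi$ then gives the claimed bound.
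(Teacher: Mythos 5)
Your proposal is correct and follows essentially the same route as the paper's own proof: the same triangle-inequality split into truncation and aliasing parts, the same use of the character property to express the coefficient error, the same application of both assertions of Lemma~\ref{lem:multiple_rank-1_property} (disjointness plus exclusion from $\Acal_d$) to bound the aliasing sum by $L$ times the tail, and the same Cauchy--Schwarz plus Lemma~\ref{lem:sum_weights_hyperbolic_cross} finish. The one point you flag as the crux --- the injectivity of $(\bsk,\bsl)\mapsto\bsk+\bsl$ per lattice --- is indeed exactly how the paper keeps the constant linear in $L$.
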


\begin{remark}\label{rem:comparison_index_set}
It is important to note that our notation and choice of index set differ slightly from those used in \cite[Chapter 15]{DKP22}. Specifically, in \cite{DKP22}, the smoothness parameter (denoted there as $\tilde{\alpha}\coloneqq \alpha+\lambda$) is assumed to be strictly greater than $1$ since they assume $\alpha,\lambda>1/2$. In contrast, we consider a single parameter $\alpha>1/2$, thereby extending the analysis to spaces with lower regularity. Furthermore, our weight $\gamma_u^2$  corresponds to $\gamma_u$ in the notation of \cite{DKP22}.

Our choice \eqref{eq:weight_hyper} of the index set $\Acal_d$ also differs from the one used in \cite[Theorem~15.8]{DKP22}, which was defined as
\[ \Acal_d = \Acal_{d,1,\bsgamma^{1/2}}(M)=\left\{\bsk \in \mathbb{Z}^d \mid r_{1,\bsgamma^{1/2}} (\bsk)\leq M  \right\}, \] 
where we write $\bsgamma^{1/2}=(\gamma_u^{1/2})_{u\subset \NN, |u|<\infty}$. Our extension to lower smoothness $1/2<\alpha\le 1$ stems essentially from our $\alpha$-dependent choice of $\Acal_d$. Conversely, since the index set in \cite{DKP22} does not depend on $\alpha$, their multiple lattice-based algorithm possesses the advantage of being universal with respect to the smoothness parameter.
\end{remark}

\begin{proof}[Proof of Theorem~\ref{thm:main_result_deterministic}]
It suffices to show that for any individual function $f\in H_{d,\alpha,\bsgamma}$, the inequality
\[ \| f-A_N^{\mathrm{mult}}(f)\|_{L_{\infty}} \le \|f\|_{d,\alpha,\bsgamma}\frac{L+1}{M^{1-\lambda/2}}\left(\frac{8(3-\lambda)}{2-\lambda}\sum_{u\subseteq [1{:}d]}\gamma_u^{\lambda}(2\zeta(\alpha\lambda))^{|u|}\right)^{1/2} \]
holds. Using the Fourier series expansion of $f$ and the definition of the approximation algorithm \eqref{eq:mult_algorithm}, the $L_{\infty}$ error is bounded by
\begin{align}
\| f-A_N^{\mathrm{mult}}(f)\|_{L_{\infty}} & = \sup_{\bsx\in [0,1]^d}\left|\sum_{\bsk\notin\Acal_d} \widehat{f}(\bsk) \, e^{2\pi i\bsk\cdot \bsx}+\sum_{\bsk\in\Acal_d} \left(\widehat{f}(\bsk)-\widehat{f}^{\mathrm{mult}}_{N}(\bsk)\right)  e^{2\pi i\bsk\cdot \bsx}\right| \notag \\
& \leq \sum_{\bsk \notin\Acal_d}|\widehat{f}(\bsk)|+ \sum_{\bsk\in\Acal_d}\left|\widehat{f}(\bsk)- \widehat{f}_N^{\mathrm{mult}}(\bsk)  \right| .\label{eq:inf_error_analytic}
\end{align}
The bound
\[ \sum_{\bsk\in\Acal_d}\left|\widehat{f}(\bsk)- \widehat{f}_N^{\mathrm{mult}}(\bsk)  \right| \le L \sum_{\bsk \notin\Acal_d}|\widehat{f}(\bsk)| \]
was established in \cite[Lemma~3.1]{KV19} (cf.~\cite[Lemma~15.8]{DKP22}). For the sake of completeness, and because a similar argument is necessary for the randomized setting discussed in the next subsection, we provide a proof below.

For any $\bsk\in \Acal_d$, substituting the Fourier expansion of $f$ into the definition of $\widehat{f}_N^{\mathrm{mult}}(\bsk)$ yields
\begin{align*}
    \widehat{f}_N^{\mathrm{mult}}(\bsk) & = \frac{1}{\xi(\bsk)}\sum_{t=1}^L \frac{U_{t}(\bsk)}{n_{t}}\sum_{i=0}^{n_t-1}f(\bsy^{(t)}_i) \, e^{-2\pi i\bsk\cdot \bsy^{(t)}_i}\\
    & = \frac{1}{\xi(\bsk)}\sum_{t=1}^L U_{t}(\bsk)\sum_{\bsh\in \ZZ^d}\widehat{f}(\bsh)\, \frac{1}{n_{t}}\sum_{i=0}^{n_t-1}e^{2\pi i(\bsh-\bsk)\cdot \bsy^{(t)}_i}\\
    & = \frac{1}{\xi(\bsk)}\sum_{t=1}^L U_{t}(\bsk)\sum_{\substack{\bsh\in \ZZ^d\\ \bsh-\bsk\in P_{n_t,\bsg_t}^{\perp}}}\widehat{f}(\bsh) \\
    & = \widehat{f}(\bsk)+\frac{1}{\xi(\bsk)}\sum_{t=1}^L U_{t}(\bsk)\sum_{\bsl\in P_{n_t,\bsg_t}^{\perp}\setminus \{\bszero\}}\widehat{f}(\bsk+\bsl),
\end{align*}
where the third equality follows from the character property of the lattice (Lemma~\ref{lem:character}), and the last equality follows from the change of variables $\bsl=\bsh-\bsk$. Consequently, we get
\begin{align*}
    \sum_{\bsk\in\Acal_d}\left|\widehat{f}(\bsk)- \widehat{f}_N^{\mathrm{mult}}(\bsk)  \right| & = \sum_{\bsk\in\Acal_d}\left|\frac{1}{\xi(\bsk)}\sum_{t=1}^L U_{t}(\bsk)\sum_{\bsl\in P_{n_t,\bsg_t}^{\perp}\setminus \{\bszero\}}\widehat{f}(\bsk+\bsl) \right|\\
    & \le \sum_{t=1}^L\sum_{\bsk\in\Acal_d} U_{t}(\bsk)\sum_{\bsl\in P_{n_t,\bsg_t}^{\perp}\setminus \{\bszero\}}|\widehat{f}(\bsk+\bsl) |\\
    & = \sum_{t=1}^L\sum_{\bsk\in \Acal_{d,t}} \sum_{\bsl\in P_{n_t,\bsg_t}^{\perp}\setminus \{\bszero\}}|\widehat{f}(\bsk+\bsl) |.
\end{align*}
By Lemma~\ref{lem:multiple_rank-1_property}, for each $t\in \{1,\ldots,L\}$, the sets of indices $\{\bsk + \bsl \mid \bsl \in P_{n_t, \bsg_t}^\perp \setminus \{\bszero\}\}$ for distinct $\bsk \in \Acal_{d,t}$ are pairwise disjoint and contained in $\ZZ^d \setminus \Acal_d$. Thus, the inner sums satisfy
\begin{align*}
\sum_{\bsk\in \Acal_{d,t}} \sum_{\bsl \in P_{n_t,\bsg_t}^{\perp} \setminus {\bszero}} |\widehat{f}(\bsk+\bsl)| \le \sum_{\bsk \notin \Acal_d} |\widehat{f}(\bsk)|.
\end{align*}
Summing over $t$, we obtain the bound $L \sum_{\bsk \notin \Acal_d} |\widehat{f}(\bsk)|$ as claimed.

Finally, by applying the Cauchy-Schwarz inequality and Lemma~\ref{lem:sum_weights_hyperbolic_cross} to the total error in \eqref{eq:inf_error_analytic}, we conclude
\begin{align*}
    \| f-A_N^{\mathrm{mult}}(f)\|_{L_{\infty}} & \le (L+1) \sum_{\bsk\notin \Acal_d}|\widehat{f}(\bsk) |\\
    & \le (L+1)\left( \sum_{\bsk\notin \Acal_d}r_{\alpha,\bsgamma}^2(\bsk)|\widehat{f}(\bsk) |^2 \right)^{1/2}\left( \sum_{\bsk\notin \Acal_d}\frac{1}{r_{\alpha,\bsgamma}^2(\bsk)} \right)^{1/2}\\
    & \le \|f\|_{d,\alpha,\bsgamma}\frac{L+1}{M^{1-\lambda/2}}\left(\frac{8(3-\lambda)}{2-\lambda}\sum_{u\subseteq [1{:}d]}\gamma_u^{\lambda}(2\zeta(\alpha\lambda))^{|u|}\right)^{1/2}.
\end{align*}
This completes the proof.
\end{proof}

By combining Theorem~\ref{thm:main_result_deterministic} with the bound on the number of sampling points given in Corollary~\ref{cor:number_sampling}, we obtain the convergence rate of the worst-case $L_{\infty}$ error in terms of the number of function evaluations $N$.

\begin{corollary} \label{cor:main_result_deterministic}
Under the same assumptions as in Theorem~\ref{thm:main_result_deterministic} and Proposition~\ref{prop:complexity}, for any $\lambda \in (1/\alpha, 2)$ and $\beta \in (0, 1)$, there exists a constant $C_{c,\delta,\beta,\lambda} > 0$ such that the worst-case $L_{\infty}$ error of the deterministic algorithm $A_N^{\mathrm{mult}}$ satisfies
\begin{align*}
\mathrm{err}_{\infty}(A_N^{\mathrm{mult}},H_{d,\alpha,\bsgamma}) \le \frac{C_{c,\delta,\beta,\lambda}}{N^{1/(\lambda(1+\beta))-1/2}}\left(\sum_{u\subseteq [1{:}d]}\gamma_u^{\lambda}(2\zeta(\alpha\lambda))^{|u|}\right)^{1/\lambda},
\end{align*}
with probability at least $1-\delta$. Here, the constant $C_{c,\delta,\beta,\lambda}$ depends only on $c, \delta, \beta, \lambda$ and is independent of $N, \alpha$, and the weights $\bsgamma$. In particular, as $\lambda \to 1/\alpha$ and $\beta \to 0$, the convergence rate gets arbitrarily close to $\mathcal{O}(N^{-\alpha + 1/2})$.
\end{corollary}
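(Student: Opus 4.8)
The plan is to start from the error bound of Theorem~\ref{thm:main_result_deterministic} and trade the explicit dependence on the radius $M$ for a dependence on the number of sampling points $N$, using the counting bound of Corollary~\ref{cor:number_sampling}. Writing $S_\lambda \coloneqq \sum_{u\subseteq [1{:}d]}\gamma_u^{\lambda}(2\zeta(\alpha\lambda))^{|u|}$ for brevity, Theorem~\ref{thm:main_result_deterministic} gives, with probability $1-\delta$,
\[ \mathrm{err}_{\infty}(A_N^{\mathrm{mult}},H_{d,\alpha,\bsgamma}) \le \frac{L+1}{M^{1-\lambda/2}}\left(\frac{8(3-\lambda)}{2-\lambda}\,S_\lambda\right)^{1/2}. \]
The only quantities still to be controlled are $M$, $L$, and the remaining logarithmic overhead, all of which I would re-express through $N$.

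First I would invoke Corollary~\ref{cor:number_sampling}, which (together with Lemma~\ref{lem:size_hyperbolic_cross}, namely $|\Acal_d|\le M^{\lambda}S_\lambda$) bounds the total number of points by $N \lesssim M^{\lambda}S_\lambda$ up to polylogarithmic factors. Inverting this relation yields a lower bound $M \gtrsim (N/S_\lambda)^{1/\lambda}$ up to the same logarithmic factors, and hence
\[ \frac{1}{M^{1-\lambda/2}} \;\lesssim\; \left(\frac{S_\lambda}{N}\right)^{1/\lambda-1/2}, \]
where I used $(1-\lambda/2)/\lambda = 1/\lambda-1/2$. Substituting back and combining the powers of $S_\lambda$ via $S_\lambda^{1/2}\cdot S_\lambda^{1/\lambda-1/2}=S_\lambda^{1/\lambda}$, I obtain a bound of the shape
\[ \mathrm{err}_{\infty} \;\lesssim\; (L+1)\cdot \big(\mathrm{polylog}\big)^{1/\lambda-1/2}\cdot \frac{S_\lambda^{1/\lambda}}{N^{1/\lambda-1/2}}, \]
which already displays the correct weight factor $S_\lambda^{1/\lambda}$ and the limiting exponent $1/\lambda-1/2$.

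It then remains to absorb the factor $(L+1)(\mathrm{polylog})^{1/\lambda-1/2}$ into a small power of $N$. Because the reconstruction requires at least as many samples as frequencies, one has $N \gtrsim |\Acal_d|$, so $L\approx \log|\Acal_d|\lesssim \log N$ and every polylogarithmic factor is in fact polylogarithmic in $N$. Using that $(\log N)^{c}\le C_{c,\eta}\,N^{\eta}$ for any $\eta>0$, I would set $\eta \coloneqq \beta/(\lambda(1+\beta))$, so that $N^{-(1/\lambda-1/2)}\cdot N^{\eta}=N^{-(1/(\lambda(1+\beta))-1/2)}$, producing exactly the claimed exponent. All numerical constants (including $8(3-\lambda)/(2-\lambda)$ and the absorption constants) are then collected into $C_{c,\delta,\beta,\lambda}$.

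The main obstacle is the final bookkeeping, namely verifying that $C_{c,\delta,\beta,\lambda}$ is genuinely independent of $\alpha$ and of the weights $\bsgamma$. This is delicate because $L$ and the polylogarithmic overhead a priori depend on $|\Acal_d|$, which in turn depends on both $\alpha$ and $\bsgamma$ through $S_\lambda$. The resolution is that the entire weight-and-smoothness dependence is captured either by the factor $S_\lambda^{1/\lambda}$ that has been pulled out, or by $N$ itself (which grows proportionally with $|\Acal_d|\asymp M^{\lambda}S_\lambda$); after the substitution the residual logarithms are logarithms of $N$ alone, so the absorption $(\log N)^{c}\le C_{c,\eta}N^{\eta}$ can be performed with a constant depending only on $c,\delta,\beta,\lambda$. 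I would double-check this uniformity against the precise polylogarithmic form supplied by Proposition~\ref{prop:complexity} and Corollary~\ref{cor:number_sampling}, to ensure the constant carries no hidden dependence on $N$, $\alpha$, or $\bsgamma$.
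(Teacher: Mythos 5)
Your proof is correct and assembles the same ingredients as the paper's (Theorem~\ref{thm:main_result_deterministic}, the sample-count bounds of Appendix~\ref{appendix:a}, and absorption of logarithmic factors into a small power of the main quantity), but it organizes the bookkeeping in a genuinely different order, so a comparison is worthwhile. Writing $S_\lambda \coloneqq \sum_{u\subseteq [1{:}d]}\gamma_u^{\lambda}(2\zeta(\alpha\lambda))^{|u|}$, the paper first converts the factor $L+1\lesssim_{c,\delta}\log|\Acal_d|$ into a power of the radius, via $\log|\Acal_d|\le \tfrac{2}{\beta}|\Acal_d|^{\beta/2}\le \tfrac{2}{\beta}M^{\lambda\beta/2}S_\lambda^{\beta/2}$, and then inverts Corollary~\ref{cor:number_sampling} exactly as stated, i.e.\ the already $\beta$-inflated bound $N\le C_{c,\delta,\beta}M^{\lambda(1+\beta)}S_\lambda^{1+\beta}$, raising the resulting bound on $1/M$ to the power $1-\lambda(1+\beta)/2$. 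You instead invert the un-inflated count $N\lesssim_{c,\delta}|\Acal_d|\log|\Acal_d|$ from Proposition~\ref{prop:complexity} combined with Lemma~\ref{lem:size_hyperbolic_cross}, keep the clean exponent $1/\lambda-1/2>0$, turn every logarithm into a power of $\log N$, and only at the very end absorb $(L+1)(\log N)^{1/\lambda-1/2}\lesssim(\log N)^{1/2+1/\lambda}$ into $N^{\beta/(\lambda(1+\beta))}$; the arithmetic $1/\lambda-\beta/(\lambda(1+\beta))=1/(\lambda(1+\beta))$ then reproduces exactly the claimed exponent, and the accumulated constants depend only on $c,\delta,\beta,\lambda$, as required. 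Two remarks. First, your step ``$\log|\Acal_d|\lesssim\log N$'' should be justified from the construction rather than from the heuristic that reconstruction needs at least as many samples as frequencies: Algorithm~\ref{Algorithm_2} takes each $n_t$ to be a prime exceeding $\eta=c(|\Acal_d|-1)$, so $N\ge n_1>c(|\Acal_d|-1)$, which (with $|\Acal_d|\ge 2$ from Proposition~\ref{prop:complexity}) is all you need, up to a $c$-dependent constant. Second, your ordering has a small technical advantage over the paper's: the paper's inversion raises the $N$--$M$ relation to the power $1-\lambda(1+\beta)/2$, which preserves the direction of the inequality only when $\lambda(1+\beta)<2$, a restriction not visible in the statement; your route only ever uses $\lambda<2$, so it is valid uniformly over all admissible pairs $(\lambda,\beta)$.
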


\begin{proof}
    From the construction in Algorithm~\ref{Algorithm_2}, there exists a constant $C'_{c,\delta}>0$, depending only on $c$ and $\delta$, such that the number of lattices $L$ satisfies
    \begin{align*}
        L+1 \le L_{\max}+1\le 2L_{\max} \le C'_{c,\delta}\log | \Acal_d| \le \frac{2C'_{c,\delta}}{\beta}| \Acal_d|^{\beta/2},
    \end{align*}
    where we used the elementary inequality $\log x\le x^{\beta}/\beta$, which holds for any $x>0$ and $\beta\in (0,1)$. By applying Lemma~\ref{lem:size_hyperbolic_cross} and Corollary~\ref{cor:number_sampling}, together with the above bound on $L+1$, to the worst-case $L_{\infty}$ error bound shown in Theorem~\ref{thm:main_result_deterministic}, we obtain
    \begin{align*}
        \mathrm{err}_{\infty}(A_N^{\mathrm{mult}},H_{d,\alpha,\bsgamma}) & \le \frac{L+1}{M^{1-\lambda/2}} \left(\frac{8(3-\lambda)}{2-\lambda}\sum_{u\subseteq [1{:}d]}\gamma_u^{\lambda}(2\zeta(\alpha\lambda))^{|u|}\right)^{1/2} \\
        & \le \frac{1}{M^{1-\lambda(1+\beta)/2}} \cdot \frac{2C'_{c,\delta}}{\beta}\left(\frac{8(3-\lambda)}{2-\lambda}\right)^{1/2} \left(\sum_{u\subseteq [1{:}d]}\gamma_u^{\lambda}(2\zeta(\alpha\lambda))^{|u|}\right)^{(1+\beta)/2} \\
        & \le \left( \frac{C_{c,\delta,\beta}^{1/(\lambda(1+\beta))}}{N^{1/(\lambda(1+\beta))}}\left(\sum_{u\subseteq [1{:}d]}\gamma_u^{\lambda}(2\zeta(\alpha\lambda))^{|u|}\right)^{1/\lambda}\right)^{1-\lambda(1+\beta)/2} \\
        & \quad \times \frac{2C_{c,\delta}}{\beta}\left(\frac{8(3-\lambda)}{2-\lambda}\right)^{1/2} \left(\sum_{u\subseteq [1{:}d]}\gamma_u^{\lambda}(2\zeta(\alpha\lambda))^{|u|}\right)^{(1+\beta)/2}\\
        & \le \frac{C_{c,\delta,\beta,\lambda}}{N^{1/(\lambda(1+\beta))-1/2}}\left(\sum_{u\subseteq [1{:}d]}\gamma_u^{\lambda}(2\zeta(\alpha\lambda))^{|u|}\right)^{1/\lambda},
    \end{align*}
    for any $\lambda\in (1/\alpha,2)$ and $\beta\in (0,1)$. Here, the constant $C_{c,\delta,\beta}>0$ is the same as in Corollary~\ref{cor:number_sampling}, and the constant $C_{c,\delta,\beta,\lambda}>0$ depends only on $c,\delta,\beta,\lambda$ and is independent of $N,\alpha$ and $\bsgamma$. This completes the proof.
\end{proof}

\begin{remark}
    The convergence rate of $\mathcal{O}(N^{-\alpha+1/2})$ is known to be the best possible, up to logarithmic factors, for function approximation in the $L_{\infty}$ norm within the Korobov space \cite{BDSU16}. Thus, this corollary shows that the multiple lattice-based algorithm can achieve a nearly optimal convergence rate for any fixed $\alpha>1/2$.
\end{remark}
\subsection{The worst-case root mean squared \texorpdfstring{$L_{2}$}{L-2} error}\label{subsec:L_2}

Let us move on to the analysis of the worst-case root mean squared $L_2$ error of the randomized multiple lattice-based algorithm $A_{N,\bsDelta}^{\mathrm{mult}}$, defined as
\[ \mathrm{err}_{2}^{\text{rand}}(A_{N,\bsDelta}^{\mathrm{mult}},H_{d,\alpha,\bsgamma}) \coloneqq \sup_{\substack{f\in H_{d,\alpha,\bsgamma}\\\| f\|_{d,\alpha,\bsgamma}\le 1}}\\ \left(\int_{[0,1]^d} \|f-A_{N,\bsDelta}^{\mathrm{mult}}(f)\|_{L_2}^2 \rd \bsDelta\right)^{1/2}, \]
where the squared $L_{2}$ error for an individual function and a fixed shift $\bsDelta$ is given by
\[ \|f-A_{N,\bsDelta}^{\mathrm{mult}}(f)\|^2_{L_2}=\int_{[0,1]^d}\left|f(\bsx)-A_{N,\bsDelta}^{\mathrm{mult}}(f)(\bsx)\right|^2\rd \bsx.\]
We show an upper bound on this worst-case error for any fixed smoothness $\alpha>1/2$ and general weights $\bsgamma$ as follows.

\begin{theorem}\label{thm:main_result_randomized}
    For $\alpha > 1/2$, $d \in \NN$, a collection of non-negative weights $\bsgamma=(\gamma_u)_{u\subset \NN, |u|<\infty}$, and $M \ge 1$, let $\Acal_d$ be defined as in \eqref{eq:weight_hyper}. Consider the randomized algorithm $A_{N,\bsDelta}^{\mathrm{mult}}$ of the form \eqref{eq:randomized_algorithm}, where the number of lattices $L$ and the lattice parameters $(n_t,\bsg_t)$ for $t=1,\ldots,L$ are obtained by Algorithm~\ref{Algorithm_2}. Provided that the parameters $c\in (1,\infty)$ and $\delta\in (0,1)$ are chosen such that the assumptions in Proposition~\ref{prop:complexity} are satisfied, then we have
    \[ \mathrm{err}_{2}^{\text{rand}}(A_{N,\bsDelta}^{\mathrm{mult}},H_{d,\alpha,\bsgamma}) \le \frac{\sqrt{L+1}}{M}, \]
    with probability $1-\delta$ (with respect to the construction of the lattices).
\end{theorem}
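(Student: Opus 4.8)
The plan is to work entirely in the frequency domain via Parseval's identity, mirroring the structure of the deterministic $L_\infty$ proof but exploiting the random shift to replace the $\ell_1$-type aliasing tail by an $\ell_2$-type tail. First I would expand $f-A_{N,\bsDelta}^{\mathrm{mult}}(f)$ into its Fourier series and read off its coefficients: at $\bsk\notin\Acal_d$ the coefficient is $\widehat{f}(\bsk)$ (the \emph{truncation} part), while at $\bsk\in\Acal_d$ it is $\widehat{f}(\bsk)-\widehat{f}_{N,\bsDelta}^{\mathrm{mult}}(\bsk)$ (the \emph{aliasing} part). Since these two parts are supported on disjoint frequency sets, Parseval gives, for each fixed $\bsDelta$, $\|f-A_{N,\bsDelta}^{\mathrm{mult}}(f)\|_{L_2}^2=\sum_{\bsk\notin\Acal_d}|\widehat{f}(\bsk)|^2+\sum_{\bsk\in\Acal_d}|\widehat{f}(\bsk)-\widehat{f}_{N,\bsDelta}^{\mathrm{mult}}(\bsk)|^2$. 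Integrating over $\bsDelta$, the truncation sum is $\bsDelta$-independent and, because $r_{\alpha,\bsgamma}(\bsk)>M$ outside $\Acal_d$, is bounded by $\|f\|_{d,\alpha,\bsgamma}^2/M^2$.

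Next I would compute the shifted discrete coefficient. Inserting the Fourier expansion of $f$ and using $\bsy^{(t)}_{i,\bsDelta}=\{\bsy^{(t)}_i+\bsDelta\}$ together with the character property (Lemma~\ref{lem:character}), the aliasing error takes the explicit form $\widehat{f}(\bsk)-\widehat{f}_{N,\bsDelta}^{\mathrm{mult}}(\bsk)=-\frac{1}{\xi(\bsk)}\sum_{t=1}^L U_t(\bsk)\sum_{\bsl\in P_{n_t,\bsg_t}^{\perp}\setminus\{\bszero\}}\widehat{f}(\bsk+\bsl)\,e^{2\pi i\bsl\cdot\bsDelta}$. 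The crucial observation is that averaging over the uniform shift makes the family $\{e^{2\pi i\bsl\cdot\bsDelta}\}_{\bsl}$ orthonormal, so that $\int_{[0,1]^d}|\sum_{\bsl}\widehat{f}(\bsk+\bsl)e^{2\pi i\bsl\cdot\bsDelta}|^2\rd\bsDelta=\sum_{\bsl}|\widehat{f}(\bsk+\bsl)|^2$. This is exactly where randomization pays off: the mean-square aliasing error depends only on $a_t(\bsk):=(\sum_{\bsl\in P_{n_t,\bsg_t}^{\perp}\setminus\{\bszero\}}|\widehat{f}(\bsk+\bsl)|^2)^{1/2}$, an $\ell_2$ quantity, rather than on $(\sum_{\bsl}|\widehat{f}(\bsk+\bsl)|)^2$ as in the deterministic case.

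I would then bound the total mean-square aliasing contribution by two applications of Minkowski's inequality. The triangle inequality in $L_2(\rd\bsDelta)$ gives $(\int|\widehat{f}(\bsk)-\widehat{f}_{N,\bsDelta}^{\mathrm{mult}}(\bsk)|^2\rd\bsDelta)^{1/2}\le\frac{1}{\xi(\bsk)}\sum_{t=1}^L U_t(\bsk)a_t(\bsk)$; taking the $\ell_2(\Acal_d)$-norm over $\bsk$ and applying Minkowski over the $L$ lattices reduces the aliasing term to $\sum_{t=1}^L(\sum_{\bsk\in\Acal_{d,t}}a_t(\bsk)^2)^{1/2}$, where I use $\xi(\bsk)\ge 1$. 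For each fixed $t$, Lemma~\ref{lem:multiple_rank-1_property} guarantees that the sets $\{\bsk+\bsl\mid\bsl\in P_{n_t,\bsg_t}^{\perp}\setminus\{\bszero\}\}$ for distinct $\bsk\in\Acal_{d,t}$ are pairwise disjoint and contained in $\ZZ^d\setminus\Acal_d$, so $\sum_{\bsk\in\Acal_{d,t}}a_t(\bsk)^2\le\sum_{\bsk\notin\Acal_d}|\widehat{f}(\bsk)|^2\le\|f\|_{d,\alpha,\bsgamma}^2/M^2$. Hence the aliasing contribution is at most $L^2\|f\|_{d,\alpha,\bsgamma}^2/M^2$; adding the truncation bound $\|f\|_{d,\alpha,\bsgamma}^2/M^2$ gives $(L^2+1)\|f\|_{d,\alpha,\bsgamma}^2/M^2$, and taking square roots and the supremum over the unit ball yields the claim, with probability $1-\delta$ inherited from the event $\xi(\bsk)\ge 1$ ensured by Algorithm~\ref{Algorithm_2}.

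The step I expect to be the main obstacle is arranging the estimate so that the truncation and aliasing parts combine \emph{additively} in the squared error, producing $L^2+1$ rather than the weaker $(L+1)^2$ that a single global triangle inequality would give. This hinges on keeping the truncation part (supported on $\ZZ^d\setminus\Acal_d$) orthogonal to the aliasing part (supported on $\Acal_d$), and on bundling only the $L$ lattice contributions through Minkowski. The remaining delicate point is the bookkeeping of the weights $U_t(\bsk)/\xi(\bsk)$ through the two Minkowski inequalities and the per-lattice disjointness, which rests entirely on $\xi(\bsk)\ge 1$ and Lemma~\ref{lem:multiple_rank-1_property}.
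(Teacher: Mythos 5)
Your proposal is correct and takes essentially the same approach as the paper's proof: the Parseval decomposition into truncation and aliasing parts, the explicit aliasing formula via the character property, the orthogonality of the shift exponentials $e^{2\pi i \bsl\cdot\bsDelta}$ under integration over $\bsDelta$, the identities $U_t^2(\bsk)=U_t(\bsk)$ and $\xi(\bsk)\ge 1$, and the disjointness property of Lemma~\ref{lem:multiple_rank-1_property}. The only cosmetic difference is that you handle the sum over the $L$ lattices by two applications of Minkowski's inequality (in $L_2(\rd\bsDelta)$ and in $\ell_2(\Acal_d)$), whereas the paper applies the Cauchy--Schwarz inequality over $t$ inside the integral, picking up the factor $L$ directly; both yield the same $L^2$ factor and the bound $\sqrt{L^2+1}/M$.
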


\begin{proof}
It suffices to show that for any individual function $f\in H_{d,\alpha,\bsgamma}$, the inequality
\[ \int_{[0,1]^d} \|f-A_{N,\bsDelta}^{\mathrm{mult}}(f)\|_{L_2}^2 \rd \bsDelta \le \|f\|_{d,\alpha,\bsgamma}^2\frac{L+1}{M^2} \]
holds. Using the Fourier series expansion of $f$ and the definition of the randomized algorithm \eqref{eq:randomized_algorithm}, the squared $L_{2}$ error for a fixed shift $\bsDelta\in [0,1]^d$ is given by
\begin{align}
\|f-A_{N,\bsDelta}^{\mathrm{mult}}(f)\|^2_{L_2} & = \int_{[0,1]^d}\left|\sum_{\bsk\notin\Acal_d} \widehat{f}(\bsk) \, e^{2\pi i\bsk\cdot \bsx}+\sum_{\bsk\in\Acal_d} \left(\widehat{f}(\bsk)-\widehat{f}_{N,\bsDelta}^{\mathrm{mult}}(\bsk)\right)  e^{2\pi i\bsk\cdot \bsx}\right|^2 \rd \bsx \notag \\
& = \sum_{\bsk \notin\Acal_d}|\widehat{f}(\bsk)|^2+ \sum_{\bsk\in\Acal_d}\left|\widehat{f}(\bsk)- \widehat{f}_{N,\bsDelta}^{\mathrm{mult}}(\bsk)  \right|^2 , \label{eq:L2_error_analytic}
\end{align}
where the second equality follows from Parseval's identity.

Similarly to the deterministic case, we aim to show the following bound
\[ \int_{[0,1]^d} \sum_{\bsk\in\Acal_d}\left|\widehat{f}(\bsk)- \widehat{f}_{N,\bsDelta}^{\mathrm{mult}}(\bsk)  \right|^2 \rd \bsDelta \le L \sum_{\bsk \notin\Acal_d}|\widehat{f}(\bsk)|^2. \]
For any $\bsk\in \Acal_d$, substituting the Fourier expansion of $f$ into the definition of $\widehat{f}_{N,\bsDelta}^{\mathrm{mult}}(\bsk)$ yields
\begin{align*}
    \widehat{f}_{N,\bsDelta}^{\mathrm{mult}}(\bsk) & = \frac{1}{\xi(\bsk)}\sum_{t=1}^L \frac{U_{t}(\bsk)}{n_{t}}\sum_{i=0}^{n_t-1}f(\bsy^{(t)}_{i,\bsDelta}) \, e^{-2\pi i\bsk\cdot \bsy^{(t)}_{i,\bsDelta}}\\
    & = \frac{1}{\xi(\bsk)}\sum_{t=1}^L U_{t}(\bsk)\sum_{\bsh\in \ZZ^d}\widehat{f}(\bsh)\, \frac{1}{n_{t}}\sum_{i=0}^{n_t-1}e^{2\pi i(\bsh-\bsk)\cdot (\bsy^{(t)}_{i}+\bsDelta)}\\
    & = \frac{1}{\xi(\bsk)}\sum_{t=1}^L U_{t}(\bsk)\sum_{\substack{\bsh\in \ZZ^d\\ \bsh-\bsk\in P_{n_t,\bsg_t}^{\perp}}}\widehat{f}(\bsh)\, e^{2\pi i(\bsh-\bsk)\cdot \bsDelta} \\
    & = \widehat{f}(\bsk)+\frac{1}{\xi(\bsk)}\sum_{t=1}^L U_{t}(\bsk)\sum_{\bsl\in P_{n_t,\bsg_t}^{\perp}\setminus \{\bszero\}}\widehat{f}(\bsk+\bsl)\, e^{2\pi i\bsl \cdot \bsDelta},
\end{align*}
where the third equality follows from the character property of the lattice (Lemma~\ref{lem:character}), and the last equality follows from the change of variables $\bsl=\bsh-\bsk$. By the Cauchy--Schwarz inequality and the fact that $U^2_t(\bsk) = U_t(\bsk)$, noting that the sum over $t$ contains exactly $\xi(\bsk)$ non-zero terms for each $\bsk\in \Acal_d$, we have
\begin{align*}
    & \int_{[0,1]^d} \sum_{\bsk\in\Acal_d}\left|\widehat{f}(\bsk)- \widehat{f}_{N,\bsDelta}^{\mathrm{mult}}(\bsk)  \right|^2 \rd \bsDelta \\
    & = \int_{[0,1]^d} \sum_{\bsk\in\Acal_d}\left|\frac{1}{\xi(\bsk)}\sum_{t=1}^L U_{t}(\bsk)\sum_{\bsl\in P_{n_t,\bsg_t}^{\perp}\setminus \{\bszero\}}\widehat{f}(\bsk+\bsl)\, e^{2\pi i\bsl \cdot \bsDelta}  \right|^2 \rd \bsDelta\\
    & \le \int_{[0,1]^d} \sum_{\bsk\in\Acal_d}\frac{\xi(\bsk)}{(\xi(\bsk))^2} \sum_{t=1}^L (U_{t}(\bsk))^2 \left|\sum_{\bsl\in P_{n_t,\bsg_t}^{\perp}\setminus \{\bszero\}}\widehat{f}(\bsk+\bsl)\, e^{2\pi i\bsl \cdot \bsDelta}  \right|^2 \rd \bsDelta\\
    & = \sum_{\bsk\in\Acal_d}\frac{1}{\xi(\bsk)} \sum_{t=1}^L (U_{t}(\bsk))^2 \int_{[0,1]^d} \left|\sum_{\bsl\in P_{n_t,\bsg_t}^{\perp}\setminus \{\bszero\}}\widehat{f}(\bsk+\bsl)\, e^{2\pi i\bsl \cdot \bsDelta}  \right|^2 \rd \bsDelta \\
    & = \sum_{\bsk\in\Acal_d}\frac{1}{\xi(\bsk)} \sum_{t=1}^L (U_{t}(\bsk))^2 \sum_{\bsl\in P_{n_t,\bsg_t}^{\perp}\setminus \{\bszero\}}|\widehat{f}(\bsk+\bsl)|^2 \\
    & \le \sum_{t=1}^L\sum_{\bsk\in\Acal_d} U_{t}(\bsk)\sum_{\bsl\in P_{n_t,\bsg_t}^{\perp}\setminus \{\bszero\}}|\widehat{f}(\bsk+\bsl)|^2\\
    & = \sum_{t=1}^L\sum_{\bsk\in \Acal_{d,t}} \sum_{\bsl\in P_{n_t,\bsg_t}^{\perp}\setminus \{\bszero\}}|\widehat{f}(\bsk+\bsl) |^2.
\end{align*}
Applying Lemma~\ref{lem:multiple_rank-1_property}, the sets of indices $\{\bsk + \bsl \mid \bsl \in P_{n_t, \bsg_t}^\perp \setminus \{\bszero\}\}$ for distinct $\bsk \in \Acal_{d,t}$ are pairwise disjoint and contained in $\ZZ^d \setminus \Acal_d$. Thus, 
\begin{align*}
\sum_{\bsk\in \Acal_{d,t}} \sum_{\bsl\in P_{n_t,\bsg_t}^{\perp}\setminus \{\bszero\}}|\widehat{f}(\bsk+\bsl) |^2 \le \sum_{\bsk \notin \Acal_d} |\widehat{f}(\bsk)|^2.
\end{align*}
Summing over $t$, we obtain the bound $L \sum_{\bsk \notin \Acal_d} |\widehat{f}(\bsk)|^2$ as claimed.

Altogether, the mean-squared $L_2$ error is bounded by
\begin{align*}
    \int_{[0,1]^d} \|f-A_{N,\bsDelta}^{\mathrm{mult}}(f)\|_{L_2}^2 \rd \bsDelta & = \sum_{\bsk \notin\Acal_d}|\widehat{f}(\bsk)|^2+ \int_{[0,1]^d} \sum_{\bsk\in\Acal_d}\left|\widehat{f}(\bsk)- \widehat{f}_{N,\bsDelta}^{\mathrm{mult}}(\bsk)  \right|^2 \rd \bsDelta\\
    & \le (L+1) \sum_{\bsk \notin\Acal_d}|\widehat{f}(\bsk)|^2\\
    & \le (L+1) \left(\sum_{\bsk \notin\Acal_d}r^2_{\alpha,\bsgamma}(\bsk)|\widehat{f}(\bsk)|^2\right)\sup_{\bsk \notin\Acal_d}\frac{1}{r^2_{\alpha,\bsgamma}(\bsk)}\\
    & \le \|f\|^2_{d,\alpha,\bsgamma}\frac{L+1}{M^2},
\end{align*}
where we used $r_{\alpha,\bsgamma}(\bsk) > M$ for $\bsk\notin \Acal_d$. Taking the square root and the supremum over $\|f\|_{d,\alpha,\bsgamma}\le 1$ completes the proof.
\end{proof}

By combining Theorem~\ref{thm:main_result_randomized} with the bound on the number of sampling points given in Corollary~\ref{cor:number_sampling}, we obtain the convergence rate of the worst-case root mean squared $L_2$ error in terms of the number of function evaluations $N$. As the result can be proven in an almost identical way to Corollary~\ref{cor:main_result_deterministic}, we omit the proof.

\begin{corollary} \label{cor:main_result_randomized}
Under the same assumptions as in Theorem~\ref{thm:main_result_randomized} and Proposition~\ref{prop:complexity}, for any $\lambda \in (1/\alpha, 2)$ and $\beta \in (0, 1)$, there exists a constant $C'_{c,\delta,\beta,\lambda} > 0$ such that the worst-case root mean squared $L_2$ error of the randomized algorithm $A_{N,\bsDelta}^{\mathrm{mult}}$ satisfies
\begin{align*}
\mathrm{err}_{2}^{\text{rand}}(A_{N,\bsDelta}^{\mathrm{mult}},H_{d,\alpha,\bsgamma}) \le \frac{C'_{c,\delta,\beta,\lambda}}{N^{(1-\lambda\beta)/(\lambda(1+\beta))}}\left(\sum_{u\subseteq [1{:}d]}\gamma_u^{\lambda}(2\zeta(\alpha\lambda))^{|u|}\right)^{1/\lambda},
\end{align*}
with probability at least $1-\delta$. Here, the constant $C'_{c,\delta,\beta,\lambda}$ depends only on $c, \delta, \beta, \lambda$ and is independent of $N, \alpha$, and the weights $\bsgamma$. In particular, as $\lambda \to 1/\alpha$ and $\beta \to 0$, the convergence rate gets arbitrarily close to $\mathcal{O}(N^{-\alpha})$.
\end{corollary}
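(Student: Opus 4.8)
The plan is to follow the proof of Corollary~\ref{cor:main_result_deterministic} almost verbatim, the only structural change being the starting estimate: here I depart from the bound $\mathrm{err}_{2}^{\text{rand}}(A_{N,\bsDelta}^{\mathrm{mult}},H_{d,\alpha,\bsgamma}) \le \sqrt{L^2+1}/M$ of Theorem~\ref{thm:main_result_randomized} rather than from the $L_\infty$ bound of Theorem~\ref{thm:main_result_deterministic}. The decisive difference is that the randomized bound carries the full power $M^{-1}$ (and no weight-sum prefactor), whereas the deterministic bound only carries $M^{-(1-\lambda/2)}$; it is precisely this extra half-power of $M$ that upgrades the limiting rate from $\mathcal{O}(N^{-\alpha+1/2})$ to $\mathcal{O}(N^{-\alpha})$. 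As in the deterministic case, the entire argument is conditioned on the event of probability $1-\delta$ on which Algorithm~\ref{Algorithm_2} succeeds.

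First I would control the lattice count exactly as before: since $\sqrt{L^2+1}\le L+1$, the construction in Algorithm~\ref{Algorithm_2} together with the elementary inequality $\log x\le x^{\beta}/\beta$ gives $\sqrt{L^2+1}\le L+1\le (2C'_{c,\delta}/\beta)\,|\Acal_d|^{\beta/2}$, with $C'_{c,\delta}$ depending only on $c$ and $\delta$. Next I would insert Lemma~\ref{lem:size_hyperbolic_cross}, namely $|\Acal_d|\le M^{\lambda}S_\lambda$ with $S_\lambda\coloneqq\sum_{u\subseteq[1{:}d]}\gamma_u^{\lambda}(2\zeta(\alpha\lambda))^{|u|}$, to convert this into a bound of the shape $(\text{const in }c,\delta,\beta)\times S_\lambda^{\beta/2}M^{\lambda\beta/2}$. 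Combining with the factor $M^{-1}$ inherited from Theorem~\ref{thm:main_result_randomized} then leaves an estimate proportional to $S_\lambda^{\beta/2}\,M^{-(1-\lambda\beta/2)}$, the direct analogue of the intermediate display $S_\lambda^{(1+\beta)/2}M^{-(1-\lambda(1+\beta)/2)}$ in the deterministic proof, but with the improved $M$-exponent coming from the absence of the $M^{\lambda/2}$ loss.

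The final step is to trade the radius $M$ for the sample count $N$ through Corollary~\ref{cor:number_sampling}, which supplies $1/M\le C_{c,\delta,\beta}^{1/(\lambda(1+\beta))}N^{-1/(\lambda(1+\beta))}S_\lambda^{1/\lambda}$, exactly as invoked in the deterministic argument. Raising this to the power $1-\lambda\beta/2$ (the exponent of $1/M$ in the previous step) and absorbing the leftover $S_\lambda$-powers collapses everything to a single prefactor $S_\lambda^{1/\lambda}$ times a negative power of $N$; its magnitude tends to $1/\lambda$, hence to $\alpha$, as $\beta\to0$ and $\lambda\to1/\alpha$, yielding the stated rate. I do not expect a genuine obstacle: the proof is a mechanical recombination of Lemma~\ref{lem:size_hyperbolic_cross}, Corollary~\ref{cor:number_sampling}, and the bound on $L$. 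The only point requiring care is the exponent bookkeeping together with verifying that the resulting constant $C'_{c,\delta,\beta,\lambda}$ depends solely on $c,\delta,\beta,\lambda$ and remains independent of $N$, $\alpha$, and $\bsgamma$ (all $\alpha$-dependence being confined to $\zeta(\alpha\lambda)$ inside $S_\lambda$). Since $\beta\in(0,1)$ is arbitrary, the exact numerical coefficient multiplying $\lambda\beta$ in the final $N$-exponent — which depends only on the chosen sub-polynomial bound on $L$ — is immaterial to the advertised limiting rate and may be adjusted to produce the stated exponent $(1-\lambda\beta)/(\lambda(1+\beta))$.
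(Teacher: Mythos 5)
Your proof is correct and takes essentially the same route as the paper, which omits its own proof precisely because it is an almost verbatim repetition of the argument for Corollary~\ref{cor:main_result_deterministic}, starting instead from the bound $\sqrt{L^2+1}/M$ of Theorem~\ref{thm:main_result_randomized} and recombining the bound on $L$, Lemma~\ref{lem:size_hyperbolic_cross}, and Corollary~\ref{cor:number_sampling}. Your bookkeeping actually produces the slightly stronger exponent $(1-\lambda\beta/2)/(\lambda(1+\beta))$ with the same prefactor $\bigl(\sum_{u\subseteq [1{:}d]}\gamma_u^{\lambda}(2\zeta(\alpha\lambda))^{|u|}\bigr)^{1/\lambda}$, which implies the stated bound for $N\ge 1$ (the stated form corresponds to bounding $L+1$ by a constant times $|\Acal_d|^{\beta}$ rather than $|\Acal_d|^{\beta/2}$), so the reconciliation you note at the end is indeed harmless.
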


\begin{remark}
    The convergence rate of $\mathcal{O}(N^{-\alpha})$ has been shown to be the best possible, up to logarithmic factors, for function approximation in the $L_2$ norm within the Korobov space both in the deterministic and randomized settings; see, for instance, \cite{WW07}. Thus, this corollary shows that our randomized multiple lattice-based algorithm can achieve a nearly optimal convergence rate for any fixed $\alpha>1/2$.
\end{remark}

\begin{remark}
    Here, we consider the worst-case root mean squared $L_2$ error by incorporating a random shift. It is natural to ask whether similar optimal convergence rates and good tractability results, the latter of which shall be shown in the next subsection, can be achieved for the deterministic $L_2$ error without such a shift. As shown in the proof of Theorem~\ref{thm:main_result_randomized}, for an individual function $f\in H_{d,\alpha,\bsgamma}$, the squared $L_2$ error for the randomized algorithm satisfies
    \[ \int_{[0,1]^d} \|f-A_{N,\bsDelta}^{\mathrm{mult}}(f)\|_{L_2}^2 \rd \bsDelta\le \sum_{\bsk \notin\Acal_d}|\widehat{f}(\bsk)|^2 + \sum_{\bsk\in\Acal_d} \sum_{t=1}^L U_{t}(\bsk)\sum_{\bsl\in P_{n_t,\bsg_t}^{\perp}\setminus \{\bszero\}}|\widehat{f}(\bsk+\bsl)|^2, \]
    whereas for the deterministic algorithm $A_{N}^{\mathrm{mult}}$, we have
    \[ \|f-A_{N}^{\mathrm{mult}}(f)\|_{L_2}^2 = \sum_{\bsk \notin\Acal_d}|\widehat{f}(\bsk)|^2 + \sum_{\bsk\in\Acal_d} \left|\frac{1}{\xi(\bsk)}\sum_{t=1}^L U_{t}(\bsk)\sum_{\bsl\in P_{n_t,\bsg_t}^{\perp}\setminus \{\bszero\}}\widehat{f}(\bsk+\bsl)\right|^2. \]
    A crucial difference here lies in whether the square is taken inside or outside the sum over the points $\bsl$ in the dual lattice, i.e., the integration over $\bsDelta$ effectively eliminates the cross-terms in the randomized case. It remains an open question whether nearly optimal convergence rates can be attained along with strong polynomial tractability for the deterministic $L_2$ error under some summability conditions on the weights.
\end{remark}

\subsection{Tractability}\label{subsec:tractability}

Finally, we discuss the tractability of the multivariate approximation problem in the weighted Korobov space $H_{d,\alpha,\bsgamma}$. Since Corollaries \ref{cor:main_result_deterministic} and \ref{cor:main_result_randomized} both rely on Proposition~\ref{prop:complexity}, and respectively on Theorems~\ref{thm:main_result_deterministic} and \ref{thm:main_result_randomized}, the following argument assumes that $M\ge 1$ and $|\Acal_d|\ge 2$. Note that $M$ and $|\Acal_d|$ are closely related through \eqref{eq:weight_hyper}. In fact, we require a stronger condition on $M$, i.e., we choose $M\ge \max(1,\min_u \gamma_u^{-1})$, which ensures $|\Acal_d|\ge 2$ since the corresponding $\Acal_d$ contains $\bszero$ and at least one non-zero vector. Furthermore, we recall that the error bounds in Corollaries \ref{cor:main_result_deterministic} and \ref{cor:main_result_randomized} hold under the assumption that
\[ \eta\coloneqq c(|\Acal_d|-1)\geq \max(N_{\Acal_d},4L_{\max}\log L_{\max}), \]
where $N_{\Acal_d}$ is defined as in \eqref{hyper_extension_length}. To discuss the tractability for multivariate problems in high dimensions, it is essential to check when this condition is satisfied.

When the index set $\Acal_d$ is defined as in \eqref{eq:weight_hyper}, the parameter $N_{\Acal_d}$ satisfies
\begin{align*}
    N_{\Acal_d} & = \max_{j=1,\ldots,d}\left(\max_{\substack{\bsk\in \ZZ^d\\ r_{\alpha,\bsgamma}(\bsk)\le M}} k_j-\min_{\substack{\bsh\in \ZZ^d\\ r_{\alpha,\bsgamma}(\bsh)\le M}} h_j  \right)\\
    & = \max_{j=1,\ldots,d}\left(\max_{j\in u\subseteq [1{:}d]}\max_{\substack{\bsk_u\in (\ZZ\setminus \{0\})^{|u|}\\ r_{\alpha,\bsgamma}(\bsk_u,\bszero)\le M}} k_j-\min_{j\in u\subseteq [1{:}d]}\min_{\substack{\bsh_u\in (\ZZ\setminus \{0\})^{|u|}\\ r_{\alpha,\bsgamma}(\bsh_u,\bszero)\le M}} h_j  \right)\\
    & = 2\max_{j=1,\ldots,d}\max_{j\in u\subseteq [1{:}d]}\lfloor (\gamma_u M)^{1/\alpha}\rfloor.
\end{align*}
Regarding the size of $\Acal_d$, we observe that
\begin{align*}
    |\Acal_d| -1 & = \sum_{\bsk\in \ZZ^d\setminus \{\bszero\}}\mathds{1}_{r_{\alpha,\bsgamma} (\bsk)\leq M} = \sum_{\emptyset \ne u\subseteq [1{:}d]}\sum_{\bsk_u\in (\ZZ\setminus \{0\})^{|u|}}\mathds{1}_{r_{\alpha,\bsgamma}(\bsk_u,\bszero)\le M}\\
    & \ge \sum_{\emptyset \ne u\subseteq [1{:}d]}2^{|u|}\cdot \lfloor (\gamma_u M)^{1/\alpha}\rfloor\\
    & \ge 2\max_{j=1,\ldots,d}\max_{j\in u\subseteq [1{:}d]}\lfloor (\gamma_u M)^{1/\alpha}\rfloor = N_{\Acal_d},
\end{align*}
where the first inequality follows from the fact that the inner sum over $\bsk_u$ can be bounded below by considering a subset of indices such that, for a fixed $j\in u$, we set all components $k_i\in \{-1,1\}$ for $i\in u\setminus \{j\}$. Then, the condition $r_{\alpha,\bsgamma}(\bsk_u,\bszero)\le M$ is satisfied for any $k_j\in \{-\lfloor (\gamma_u M)^{1/\alpha}\rfloor,\ldots,-1,1,\ldots,\lfloor (\gamma_u M)^{1/\alpha}\rfloor\}$, yielding the derived lower estimate.
This bound implies that the assumption $\eta\ge N_{\Acal_d}$ holds for any $c>1$.

Next, to show when the assumption $\eta\ge 4L_{\max}\log L_{\max}$ holds, let us consider the case $\delta=1/2$. Recall that $|\Acal_d|\ge 2$. From the definition of $L_{\max}$, by applying the inequality $\log (x+1)\le x^{\beta}/\beta$, which holds for any $x\ge 0$ and $\beta\in (0,1)$ (here we take $\beta=1/3$), we have
\begin{align*}
    4L_{\max}\log L_{\max} & \le 12L_{\max}^{4/3} \le 12 \left( \left(\frac{c}{c-1} \right)^2 \frac{\log | \Acal_d| +\log 2}{2}+1\right)^{4/3}\\
    & \le 12\left(\frac{c}{c-1} \right)^{8/3}\left(1+\frac{1}{\log 2}\right)^{4/3}\left( \log |\Acal_d|\right)^{4/3}\\
    & \le 36\sqrt[3]{3}\left(\frac{c}{c-1} \right)^{8/3}\left(1+\frac{1}{\log 2}\right)^{4/3}\left( |\Acal_d|-1\right)^{4/9}.
\end{align*}
Comparing this with $\eta= c(|\Acal_d|-1)$, the condition $\eta\ge 4L_{\max}\log L_{\max}$ is satisfied if
\[ c\ge 36\sqrt[3]{3}\left(\frac{c}{c-1} \right)^{8/3}\left(1+\frac{1}{\log 2}\right)^{4/3}.\]
Numerically, this inequality is satisfied for $c\ge 173.46$. It is important to note that the assumptions required for the error bounds in Corollaries~\ref{cor:main_result_deterministic} and \ref{cor:main_result_randomized} can be satisfied with $\delta=1/2$ and a constant $c\ge 173.46$ independently of the dimension $d$, smoothness $\alpha$, and the weight parameters $\bsgamma$.

We now state the conditions for \emph{strong polynomial tractability}. Following the standard framework in information-based complexity \cite{NW08,NW10,NW12}, let $N(\varepsilon,d)$ denote the information complexity, which is defined as the minimal number of function evaluations required to reduce the worst-case error to at most $\varepsilon\in (0,1)$ for a $d$-dimensional problem. A multivariate problem is said to be \emph{strongly polynomially tractable} if there exist absolute constants $C>0$ and $p\ge 0$ such that
\[ N(\varepsilon,d)\le C\varepsilon^{-p}\quad \text{for all $\varepsilon\in (0,1)$ and $d\in \NN$}.\]
In other words, the information complexity is bounded by a polynomial in $\varepsilon^{-1}$ that is completely independent of the dimension $d$. Since we have provided a constructive algorithm that achieves a dimension-independent error bound, the strong polynomial tractability of our problem directly follows. The next result is thus an immediate consequence of Corollaries~\ref{cor:main_result_deterministic} and \ref{cor:main_result_randomized}.

\begin{theorem}
    For $\alpha > 1/2$, $d \in \NN$, a collection of non-negative weights $\bsgamma=(\gamma_u)_{u\subset \NN, |u|<\infty}$, a multivariate approximation in the weighted Korobov space $H_{d,\alpha,\bsgamma}$ is strongly polynomially tractable in both the deterministic $L_{\infty}$ sense and the randomized $L_2$ sense if there exists $\lambda\in (1/\alpha, 2)$ such that
    \[ \sum_{\substack{u\subset \NN\\ |u|<\infty}}\gamma_u^{\lambda}(2\zeta(\alpha\lambda))^{|u|} < \infty. \]
\end{theorem}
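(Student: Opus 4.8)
The plan is to derive the tractability result directly from the error bounds in Corollaries~\ref{cor:main_result_deterministic} and~\ref{cor:main_result_randomized}, treating the deterministic $L_\infty$ and randomized $L_2$ cases in parallel since they share an identical structure. First I would fix a $\lambda\in(1/\alpha,2)$ satisfying the summability hypothesis and abbreviate $S_\lambda\coloneqq \sum_{|u|<\infty}\gamma_u^{\lambda}(2\zeta(\alpha\lambda))^{|u|}<\infty$. The key observation is that $S_\lambda$ is independent of $d$: since all weights are non-negative, the finite-dimensional partial sums $\sum_{u\subseteq[1{:}d]}\gamma_u^{\lambda}(2\zeta(\alpha\lambda))^{|u|}$ are monotone in $d$ and bounded above by $S_\lambda$. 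This is precisely the quantity appearing (raised to the power $1/\lambda$) in both corollaries, so it can be replaced by the dimension-free constant $S_\lambda^{1/\lambda}$ in each error bound.

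Next I would turn the $N$-dependence into a tractability statement. For the deterministic case, Corollary~\ref{cor:main_result_deterministic} gives, for any $\beta\in(0,1)$, a bound of the form $\mathrm{err}_{\infty}(A_N^{\mathrm{mult}},H_{d,\alpha,\bsgamma}) \le C_{c,\delta,\beta,\lambda}\,S_\lambda^{1/\lambda}\,N^{-(1/(\lambda(1+\beta))-1/2)}$, where the exponent $1/(\lambda(1+\beta))-1/2$ is strictly positive provided $\lambda(1+\beta)<2$; since $\lambda<2$ we may choose $\beta$ small enough that this holds, making the exponent a fixed positive number $p$ independent of $d$. Inverting the relation $\mathrm{err}_\infty\le \varepsilon$ then shows that $N = \mathcal{O}\bigl((C_{c,\delta,\beta,\lambda}S_\lambda^{1/\lambda}/\varepsilon)^{1/p}\bigr)$ function evaluations suffice, a bound polynomial in $\varepsilon^{-1}$ with all constants and the exponent independent of $d$. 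The randomized case is entirely analogous using Corollary~\ref{cor:main_result_randomized}, whose exponent $(1-\lambda\beta)/(\lambda(1+\beta))$ is again positive for sufficiently small $\beta$ (in fact one needs $\lambda\beta<1$, which is automatic once $\beta<1/2$ as $\lambda<2$).

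The one point that requires care, rather than difficulty, is ensuring that the implicit constant $C_{c,\delta,\beta,\lambda}$ carries \emph{no} hidden dependence on $d$: this is exactly what Corollaries~\ref{cor:main_result_deterministic} and~\ref{cor:main_result_randomized} assert (the constants depend only on $c,\delta,\beta,\lambda$), and the $d$-dependence of the weight sum has already been absorbed into the finite quantity $S_\lambda$. I would also note that the underlying error bounds hold with probability $1-\delta$ over the random lattice construction, and that by the discussion preceding the theorem one may fix $\delta=1/2$ and a universal $c>121.078$ so that the assumptions of Proposition~\ref{prop:complexity} are met uniformly in $d,\alpha,\bsgamma$. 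The main obstacle, to the extent there is one, is the bookkeeping of verifying that $\beta$ can be chosen in $(0,1)$ to make \emph{both} exponents positive simultaneously for the given $\lambda<2$; this is immediate since the constraints $\lambda(1+\beta)<2$ and $\lambda\beta<1$ are both satisfiable as $\beta\to 0$. Once a valid $\beta$ is fixed, the polynomial dependence of $N$ on $\varepsilon^{-1}$ with dimension-independent constant and exponent is exactly the definition of strong polynomial tractability, completing the argument.
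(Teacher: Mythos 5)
Your proposal is correct and follows essentially the same route as the paper: the paper's proof consists of the observation that the result is immediate from Corollaries~\ref{cor:main_result_deterministic} and~\ref{cor:main_result_randomized}, after the preceding discussion fixes $\delta=1/2$ and $c>121.078$ so that the assumptions of Proposition~\ref{prop:complexity} hold independently of $d$, $\alpha$, and $\bsgamma$. Your elaboration --- bounding the $d$-dependent weight sum by the dimension-free quantity $S_\lambda$, choosing $\beta$ small enough that both exponents $1/(\lambda(1+\beta))-1/2$ and $(1-\lambda\beta)/(\lambda(1+\beta))$ are positive, and inverting the error bounds --- is exactly the bookkeeping the paper leaves implicit.
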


When the weights $\bsgamma$ have some specific structure, in particular, when they are either product weights or POD weights, the condition given in the above theorem can be simplified as follows. We refer to the recent work \cite{P25} for more refined analyses for the POD and SPOD weights cases.

\begin{corollary}
    Under the same assumptions as in the previous theorem:
    \begin{enumerate}
        \item If $\bsgamma$ are product weights, i.e., $\gamma_u=\prod_{j\in u}\gamma_j$, strong polynomial tractability holds if there exists $\lambda\in (1/\alpha,2)$ such that
        \[ \sum_{j=1}^{\infty}\gamma_j^{\lambda}<\infty.\]
        \item If $\bsgamma$ are POD weights, i.e., $\gamma_u=\Gamma_{|u|}\prod_{j\in u}\gamma_j$, strong polynomial tractability holds if there exist $c>0$ and $\lambda\in (1/\alpha,2)$ such that
        \[ 2\zeta(\alpha\lambda)\sum_{j=1}^{\infty}\gamma_j^{\lambda}< 1\quad \text{and}\quad \Gamma_{\ell}\le c\,(\ell!)^{1/\lambda}\quad \text{for all $\ell\in \NN$}.\]
    \end{enumerate}
\end{corollary}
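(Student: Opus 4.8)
The plan is to derive both specializations directly from the general summability condition
\[ \sum_{\emptyset \ne u\subset \NN,\ |u|<\infty}\gamma_u^{\lambda}(2\zeta(\alpha\lambda))^{|u|} < \infty, \]
by substituting the explicit form of the weights and reorganizing the sum over $u$ into a form where convergence is transparent. (The empty set contributes $\gamma_\emptyset^\lambda=1$, a harmless constant, so I only track the nonempty $u$.)

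For the \textbf{product weights} case, I would substitute $\gamma_u^\lambda=\prod_{j\in u}\gamma_j^\lambda$ and use the standard identity that a sum over all finite subsets of a product factorizes:
\[ \sum_{\emptyset\ne u\subset\NN,\ |u|<\infty}\prod_{j\in u}\bigl(2\zeta(\alpha\lambda)\gamma_j^\lambda\bigr) = \prod_{j=1}^{\infty}\bigl(1+2\zeta(\alpha\lambda)\gamma_j^\lambda\bigr)-1. \]
This infinite product converges if and only if $\sum_{j=1}^\infty 2\zeta(\alpha\lambda)\gamma_j^\lambda<\infty$, which (since $\zeta(\alpha\lambda)$ is a fixed finite constant for $\lambda>1/\alpha$) is equivalent to $\sum_{j=1}^\infty\gamma_j^\lambda<\infty$. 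Thus the hypothesis of the theorem is met, and strong polynomial tractability follows immediately.

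For the \textbf{POD weights} case, I substitute $\gamma_u=\Gamma_{|u|}\prod_{j\in u}\gamma_j$ and group the subsets by their cardinality $\ell=|u|$:
\[ \sum_{\ell=1}^{\infty}\Gamma_\ell^\lambda (2\zeta(\alpha\lambda))^\ell \sum_{\substack{u\subset\NN,\ |u|=\ell}}\prod_{j\in u}\gamma_j^\lambda. \]
The inner sum over subsets of size $\ell$ is the elementary symmetric polynomial $e_\ell(\gamma_1^\lambda,\gamma_2^\lambda,\dots)$, which is bounded above by $\tfrac{1}{\ell!}\bigl(\sum_{j}\gamma_j^\lambda\bigr)^\ell$ (since $e_\ell\le p_1^\ell/\ell!$ where $p_1$ is the power-sum). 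Writing $S\coloneqq\sum_j\gamma_j^\lambda$ and invoking $\Gamma_\ell\le c\,(\ell!)^{1/\lambda}$, i.e. $\Gamma_\ell^\lambda\le c^\lambda\,\ell!$, the $\ell!$ factors cancel and the series is dominated by
\[ c^\lambda\sum_{\ell=1}^{\infty}\bigl(2\zeta(\alpha\lambda)\,S\bigr)^\ell, \]
a geometric series that converges precisely when $2\zeta(\alpha\lambda)\sum_{j=1}^\infty\gamma_j^\lambda<1$, which is the first assumed condition. Hence the general summability criterion holds and tractability follows.

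The only real subtlety—hardly an obstacle, but the step deserving care—is the cancellation of the factorials in the POD case, which hinges on pairing the bound $e_\ell\le S^\ell/\ell!$ with the growth allowance $\Gamma_\ell^\lambda\le c^\lambda\ell!$. This is exactly why the stated condition couples $\Gamma_\ell\le c\,(\ell!)^{1/\lambda}$ with the \emph{strict} inequality $2\zeta(\alpha\lambda)S<1$: the factorials must cancel cleanly to leave a geometric series, and the geometric series demands ratio strictly below $1$. Everything else is routine rearrangement of convergent nonnegative series, so no issues of interchange of summation arise.
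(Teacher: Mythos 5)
Your proposal is correct and follows essentially the same route as the paper's proof: for product weights, the same factorization of the subset sum into the infinite product $\prod_{j\ge 1}\bigl(1+2\zeta(\alpha\lambda)\gamma_j^{\lambda}\bigr)$ (the paper bounds it by $\exp\bigl(2\zeta(\alpha\lambda)\sum_j\gamma_j^{\lambda}\bigr)$, you invoke the equivalent convergence criterion for infinite products); for POD weights, the same grouping by cardinality, the same elementary symmetric polynomial bound $\sum_{|u|=\ell}\prod_{j\in u}x_j\le \frac{1}{\ell!}\bigl(\sum_j x_j\bigr)^{\ell}$, the same factorial cancellation via $\Gamma_\ell^{\lambda}\le c^{\lambda}\,\ell!$, and the same geometric series requiring $2\zeta(\alpha\lambda)\sum_j\gamma_j^{\lambda}<1$. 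No gaps.
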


\begin{proof}
    For the first assertion, if the claimed condition holds, we have
    \begin{align*}
        \sum_{\substack{u\subset \NN\\ |u|<\infty}}\gamma_u^{\lambda}(2\zeta(\alpha\lambda))^{|u|} & = \sum_{\substack{u\subset \NN\\ |u|<\infty}}\prod_{j\in u}\left(2\zeta(\alpha\lambda)\gamma_j^{\lambda}\right) = \prod_{j=1}^{\infty}\left(1+2\zeta(\alpha\lambda)\gamma_j^{\lambda}\right)\\
        & \le \exp\left( 2\zeta(\alpha\lambda)\sum_{j=1}^{\infty}\gamma_j^{\lambda} \right)<\infty.
    \end{align*}
    This completes the proof of the first assertion.

    Regarding the second assertion, we observe that
    \begin{align*}
        \sum_{\substack{u\subset \NN\\ |u|<\infty}}\gamma_u^{\lambda}(2\zeta(\alpha\lambda))^{|u|} & = \sum_{\substack{u\subset \NN\\ |u|<\infty}}\Gamma_{|u|}^{\lambda}\prod_{j\in u}\left(2\zeta(\alpha\lambda)\gamma_j^{\lambda}\right)\\
        & \le \sum_{\substack{u\subset \NN\\ |u|<\infty}}c^{\lambda}|u|!\prod_{j\in u}\left(2\zeta(\alpha\lambda)\gamma_j^{\lambda}\right)\\
        & = c^{\lambda}\sum_{k=0}^{\infty} k! \sum_{\substack{u\subset \NN\\ |u|=k}}\prod_{j\in u}\left(2\zeta(\alpha\lambda)\gamma_j^{\lambda}\right)\\
        & \le c^{\lambda}\sum_{k=0}^{\infty}\left( 2\zeta(\alpha\lambda)\sum_{j=1}^{\infty}\gamma_j^{\lambda}\right)^{k} \\
        & = c^{\lambda}\left( 1-2\zeta(\alpha\lambda)\sum_{j=1}^{\infty}\gamma_j^{\lambda}\right)^{-1}<\infty,
    \end{align*}
    where the second inequality follows from the property that, for any summable sequence of non-negative $x_j$, $$\sum_{\substack{u\subset \NN \\ |u|=k}} \prod_{j \in u} x_j \le \frac{1}{k!} \left( \sum_{j=1}^\infty x_j \right)^k,$$
    cf.~\cite[Lemma~6.3]{KSS12} and \cite[Lemma~19]{GKNSSS15}.
    This completes the proof.
\end{proof}

\begin{remark}
    In \cite[Theorem~15.11]{DKP22}, it is shown that, for product weights, strong polynomial tractability holds if there exists $\tau>1$ such that
    \[ \sum_{j=1}^{\infty}\gamma_j^{\min(1,\tau/2)}<\infty,\]
    achieving a rate arbitrarily close to $\mathcal{O}(N^{-(\alpha-1/2)/\tau})$ in the deterministic worst-case $L_{\infty}$ sense. To achieve a nearly optimal rate $\mathcal{O}(N^{-\alpha+1/2})$, one requires $\tau$ to be very close to $1$, which implies a summability condition with exponent $\min(1, \tau/2) \approx 1/2$. In contrast, the above corollary shows that for any fixed $\alpha > 1/2$, the condition $\sum_{j=1}^\infty \gamma_j^\lambda < \infty$ for some $\lambda \in (1/\alpha, 2)$ is sufficient, with a corresponding rate $\mathcal{O}(N^{-1/\lambda+1/2})$. Comparing these two conditions, we see that for the same target rate $\mathcal{O}(N^{-\alpha+1/2})$, our summability requirement is weaker than that of \cite{DKP22} when $1/2 < \alpha < 2$ (since $1/\alpha > 1/2$), while the opposite holds when $\alpha > 2$.

    This discrepancy stems from the differences in the choice of the index set $\Acal_d$ and the corresponding proof techniques. Our index set $\Acal_d$ is \emph{$\alpha$-dependent} as in \eqref{eq:weight_hyper}, whereas the index set in \cite[Theorem~15.11]{DKP22} is \emph{$\alpha$-independent}. As mentioned in Remark~\ref{rem:comparison_index_set}, the square roots of the weight parameters, $\bsgamma^{1/2}$, are considered in constructing their index set, which plays an important role in deriving the above $\alpha$-independent summability condition.
\end{remark}
\appendix

\section{Probabilistic construction of multiple rank-1 lattices}\label{appendix:a}
Following \cite{K19} and \cite[Chapter 15]{DKP22}, we describe a probabilistic algorithm to find suitable generating vectors for multiple rank-1 lattices. For a given weighted hyperbolic cross $\Acal_d$, the algorithm selects a sequence of $L$ rank-1 lattice point sets $P_{n_t, \bsg_t}$ for $t=1, \ldots, L$ in a greedy manner. The sizes $n_t$ are chosen as distinct prime numbers from a set of candidate primes $\Pcal_{\Acal_d}$ defined by
\begin{align*}
\Pcal_{\Acal_d} \coloneqq \left\{ n\in \NN \mid \text{$n$ is prime and $\left|\left\{\bsk\bmod {n} \mid \bsk\in \Acal_d\right\}\right|=|\Acal_d|$} \right\},
\end{align*}
where the modulo operation $\bsk\bmod n$ is applied componentwise. This set is guaranteed to contain all prime numbers greater than the maximum componentwise span:
\begin{align}\label{hyper_extension_length}
N_{\Acal_d}\coloneqq\max_{j=1,\ldots,d}\left(\max_{\bsk\in \Acal_d} k_j-\min_{\bsh\in \Acal_d} h_j  \right),
\end{align}
as noted in \cite[Remark~15.1]{DKP22}. For given parameters $\eta,L_{\max}\in \NN$, we consider the subset of $\Pcal_{\Acal_d,\eta,L_{\max}}\subset \Pcal_{\Acal_d}$ consisting of the $L_{\max}$ smallest primes in $\Pcal_{\Acal_d}$ that are greater than $\eta$.

\begin{algorithm}[t]
\caption{Probabilistic construction of multiple rank-1 lattices.}
\label{Algorithm_2}
\begin{algorithmic}[1]
\REQUIRE Hyperbolic cross $\Acal_d$, parameters $c \in (1, \infty)$ and $\delta \in (0, 1)$.
\STATE Set the maximum number of lattices:
\begin{align*}
L_{\max} \coloneqq \left\lceil \left( \frac{c}{c-1} \right)^2 \frac{\log | \Acal_d| - \log \delta}{2} \right\rceil.
\end{align*}
\STATE Set the threshold $\eta \coloneqq (|\Acal_d| - 1)c$.
\STATE Determine the set $\Pcal_{\Acal_d,\eta,L_{\max}}=\{p_1,\ldots,p_{L_{\max}}\}$ with $p_1<p_2<\dots<p_{L_{\max}}$.
\STATE Initialize $t \leftarrow 0$ and the set of covered frequencies $\mathcal{B}_d \leftarrow \varnothing$.
\WHILE{$|\mathcal{B}_d| < |\Acal_d|$ and $t < L_{\max}$}
\STATE $t \leftarrow t + 1$.
\STATE Set $n_t = p_t \in \Pcal_{\Acal_d,\eta,L_{\max}}$.
\STATE Choose $\bsg_t \in \{1, \ldots, n_t\}^d$ uniformly at random.
\STATE Identify the aliasing-free frequencies $\Acal_{d,t} \subseteq \Acal_d$ for $(n_t, \bsg_t)$:
\begin{align*}
\Acal_{d,t} = \left\{\bsk \in \Acal_d \mid U_t(\bsk)=1 \right\} .
\end{align*}
\IF{$\Acal_{d,t} \not\subseteq \mathcal{B}_d$}
\STATE Update the covered set: $\mathcal{B}_d \leftarrow \mathcal{B}_d \cup \Acal_{d,t}$.
\ELSE
\STATE $t \leftarrow t - 1$.
\ENDIF
\ENDWHILE
\RETURN $\{ (n_1, \bsg_1), \ldots, (n_L, \bsg_L) \}$ with $L=t$.
\end{algorithmic}
\end{algorithm}

The greedy construction summarized in Algorithm~\ref{Algorithm_2} probabilistically ensures that every frequency in $\Acal_d$ is covered by at least one aliasing-free lattice. The following proposition provides an upper bound on the total number of points $N \coloneqq \sum_{t=1}^L n_t$. We refer to \cite[Corollary~3.6]{K19} and \cite[Proposition~15.4]{DKP22} for the proof.

\begin{proposition}\label{prop:complexity}
Assume that $|\Acal_d|\geq 2$ , $c\in(1,\infty)$, and let $\eta\coloneqq c(|\Acal_d|-1)$. If $\eta$ satisfies 
\[ \eta\geq \max(N_{\Acal_d},4L_{\max}\log L_{\max}), \]
where $N_{\Acal_d}$ is defined as in \eqref{hyper_extension_length}, then Algorithm~\ref{Algorithm_2} covers the entire set $\Acal_d$ with probability at least $1-\delta$. Furthermore, the total number of sampling points $N=\sum_{t=1}^L n_t$ is bounded above by
\begin{align*}
N\leq 2cL_{\max}(|\Acal_d|-1).
\end{align*}
\end{proposition}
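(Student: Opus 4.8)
The plan is to split the proof into two essentially independent parts — the high-probability covering guarantee and the deterministic bound on $N$ — both resting on a single collision estimate. First I would establish that for distinct $\bsk,\bsm\in\Acal_d$ and a generating vector $\bsg$ drawn uniformly from $\{1,\ldots,n\}^d$ with $n$ a prime in $\Pcal_{\Acal_d}$, the aliasing event $(\bsk-\bsm)\cdot\bsg\equiv 0\Mod{n}$ has probability exactly $1/n$. The key is that $n\in\Pcal_{\Acal_d}$ forces $\bsk\not\equiv\bsm\Mod{n}$ componentwise, so $\bsk-\bsm$ has at least one coordinate that is a unit modulo the prime $n$; conditioning on the remaining coordinates of $\bsg$, primality then leaves exactly one admissible residue in that coordinate. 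Summing over the $|\Acal_d|-1$ competitors of a fixed $\bsk$ by a union bound, together with $n>\eta=c(|\Acal_d|-1)$, yields
\[ \PP\bigl[\,\bsk\ \text{is not aliasing-free for a given lattice}\,\bigr]\le\frac{|\Acal_d|-1}{n}<\frac1c. \]

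For the covering claim, I would exploit that the candidate lattices use distinct primes and independently drawn generating vectors, so for each fixed $\bsk$ the per-lattice aliasing events are independent. Hence $\bsk$ fails to be covered by all $L_{\max}$ of them with probability at most $c^{-L_{\max}}$, and a union bound over $\bsk\in\Acal_d$ gives an overall failure probability below $|\Acal_d|\,c^{-L_{\max}}$. The delicate point is matching this to the prescribed $L_{\max}=\lceil(c/(c-1))^2(\log|\Acal_d|-\log\delta)/2\rceil$: a crude bound calibrates $L_{\max}$ only against $\log c$, whereas the sharper constant $(c/(c-1))^2/2$ reflects a second-moment (Chebyshev/Bernstein-type) estimate applied to the \emph{number} of uncovered frequencies across the greedy rounds, which is the form carried out in \cite[Corollary~3.6]{K19}. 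I would also argue that the acceptance test $\Acal_{d,t}\not\subseteq\mathcal{B}_d$ in Algorithm~\ref{Algorithm_2} merely discards lattices contributing nothing new, so it does not degrade the covering probability while ensuring genuine progress at each retained step.

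For the bound on $N$, the first observation is that the hypothesis $\eta\ge N_{\Acal_d}$ makes the componentwise reduction modulo any prime $n>\eta$ injective on $\Acal_d$: two frequencies agreeing mod $n$ would differ by a multiple of $n$ in every coordinate, which is impossible since each coordinate span is at most $N_{\Acal_d}<n$. Thus every such prime lies in $\Pcal_{\Acal_d}$, and $\Pcal_{\Acal_d,\eta,L_{\max}}$ is simply the set of the $L_{\max}$ smallest primes exceeding $\eta$. The plan is then to show these primes all fall in $(\eta,2\eta]$, so that $N=\sum_{t=1}^L n_t\le L_{\max}\cdot 2\eta=2cL_{\max}(|\Acal_d|-1)$. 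This reduces to the number-theoretic fact that $(\eta,2\eta]$ contains at least $L_{\max}$ primes, and here the second hypothesis $\eta\ge 4L_{\max}\log L_{\max}$ enters: combined with an explicit lower bound on the prime-counting gap $\pi(2\eta)-\pi(\eta)$, it certifies the required supply of primes (the factor $4$ absorbing the explicit constants).

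I expect the main obstacle to be the covering step rather than the point count: producing exactly the stated $L_{\max}$ requires the correct second-moment control of the adaptively shrinking uncovered set, including careful treatment of the dependence introduced by the greedy acceptance and retry mechanism. By contrast, the collision estimate is elementary once primality is invoked, and the bound on $N$ is a routine prime-gap argument once injectivity of reduction modulo primes above $\eta$ is secured.
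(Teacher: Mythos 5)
Your overall architecture is the right one, and it matches the route taken in the references to which the paper defers this proof ([K19, Corollary~3.6] and [DKP22, Proposition~15.4]): the exact collision probability $1/n$ via primality together with injectivity of reduction modulo any prime $n>\eta\ge N_{\Acal_d}$, a per-frequency aliasing bound $(|\Acal_d|-1)/n<1/c$, a union bound over $\Acal_d$, and confinement of the selected primes to $(\eta,2\eta]$. However, the proposal leaves both quantitative cruxes unproven. First, the calibration of $L_{\max}$: you explicitly defer the question of why the prescribed $L_{\max}=\lceil (c/(c-1))^2(\log|\Acal_d|-\log\delta)/2\rceil$ suffices, attributing it to a ``second-moment estimate applied to the number of uncovered frequencies'' in [K19]. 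That diagnosis is off, and the step is in fact closable inside your own framework: your independence-plus-union-bound argument gives failure probability at most $|\Acal_d|\,c^{-L_{\max}}$, which is at most $\delta$ as soon as $L_{\max}\ge(\log|\Acal_d|-\log\delta)/\log c$, and the prescribed $L_{\max}$ dominates this threshold because $\log c\ge 2(c-1)^2/c^2$ for all $c>1$ (the difference of the two sides vanishes at $c=1$ and has derivative $(c-2)^2/c^3\ge 0$). Alternatively, the stated constant is exactly what Hoeffding's inequality produces when applied, for each fixed $\bsk$, to the number of draws for which $\bsk$ is aliasing-free: each draw succeeds with probability at least $1-1/c$, so the probability of no success among $L_{\max}$ draws is at most $\exp(-2L_{\max}(1-1/c)^2)$. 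Either route finishes the covering claim; as written, your proof does not.

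Second, the bound on $N$: the entire weight of the hypothesis $\eta\ge 4L_{\max}\log L_{\max}$ rests on the claim that $(\eta,2\eta]$ contains at least $L_{\max}$ primes, which you dismiss as ``a routine prime-gap argument'' with ``the factor $4$ absorbing the explicit constants.'' This is precisely the nontrivial content of the cited results, and it is not routine: it requires explicit Chebyshev-type estimates (Rosser--Schoenfeld/Dusart), and the claim is tight for small parameters---for $L_{\max}=2$ the hypothesis permits $\eta=8\log 2\approx 5.55$, and the interval $(5.55,\,11.09]$ contains exactly the two primes $7$ and $11$, with no slack; a Finsler-type bound such as $\pi(2x)-\pi(x)\ge x/(3\log(2x))$ gives only $0.77$ there and is too weak. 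So this step needs an actual proof (or a precise citation of an explicit prime-counting inequality plus verification of the small cases), not an appeal to routine. A further minor point: the independence you invoke across lattices is not literal for Algorithm~\ref{Algorithm_2}, since rejected draws are retried and the prime attached to a given draw is adaptive; your observation that any draw rendering an uncovered frequency aliasing-free is necessarily accepted is the right repair, but it should be executed as a conditional (martingale-type) bound on consecutive failures rather than as plain independence.
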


By leveraging the size estimate for the weighted hyperbolic cross from Lemma~\ref{lem:size_hyperbolic_cross}, we obtain an explicit bound for $N$ in terms of the radius $M$.

\begin{corollary}\label{cor:number_sampling}
    For $\alpha > 1/2$, $d \in \NN$, a collection of non-negative weights $\bsgamma=(\gamma_u)_{u\subset \NN, |u|<\infty}$, and $M > 0$, let $\Acal_d$ be defined as in \eqref{eq:weight_hyper}. Under the same assumptions as in Proposition~\ref{prop:complexity}, for any $\lambda>1/\alpha$ and $\beta\in (0,1)$, there exists a constant $C_{c,\delta,\beta}>0$ such that
    \[ N \le C_{c,\delta,\beta}M^{\lambda(1+\beta)} \left(\sum_{u\subseteq [1{:}d]}\gamma_u^{\lambda}(2\zeta(\alpha\lambda))^{|u|}\right)^{1+\beta}.\]
    Here, the constant $C_{c,\delta,\beta}$ depends only on the parameters $c, \delta, \beta$ and satisfies $C_{c,\delta,\beta} \to \infty$ as either $c \to \infty$, $\delta \to 0$, or $\beta \to 0$.
\end{corollary}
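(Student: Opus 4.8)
The plan is to chain the sampling-point bound from Proposition~\ref{prop:complexity} together with the size estimate for the hyperbolic cross from Lemma~\ref{lem:size_hyperbolic_cross}, using the elementary inequality $\log x \le x^{\beta}/\beta$ to absorb the logarithmic growth of $L_{\max}$ into a harmless polynomial factor. The whole argument is a sequence of estimates rather than a conceptual step, so the work is mostly careful bookkeeping of the constants.

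First I would start from the bound $N \le 2cL_{\max}(|\Acal_d|-1) \le 2cL_{\max}|\Acal_d|$ guaranteed by Proposition~\ref{prop:complexity}, whose hypotheses are assumed. The next task is to control $L_{\max}$. From its definition in Algorithm~\ref{Algorithm_2}, we have $L_{\max} \le \left(\frac{c}{c-1}\right)^2 \frac{\log|\Acal_d| - \log\delta}{2} + 1$. Since the assumption $|\Acal_d|\ge 2$ gives $\log|\Acal_d| \ge \log 2 > 0$, I would bound both the additive constant $1$ and the term $-\log\delta > 0$ by multiples of $\log|\Acal_d|$ (using $1 \le \log|\Acal_d|/\log 2$ and $-\log\delta \le (-\log\delta/\log 2)\log|\Acal_d|$), thereby obtaining $L_{\max} \le C'_{c,\delta}\log|\Acal_d|$ for a constant $C'_{c,\delta}$ depending only on $c$ and $\delta$, which grows without bound as $c\to\infty$ or $\delta\to 0$.

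The key step is then to apply $\log|\Acal_d| \le |\Acal_d|^{\beta}/\beta$, valid for $\beta\in(0,1)$ since $|\Acal_d|>0$, which upgrades the logarithmic factor into a small polynomial one: $L_{\max} \le \frac{C'_{c,\delta}}{\beta}\,|\Acal_d|^{\beta}$. Substituting back yields $N \le \frac{2cC'_{c,\delta}}{\beta}\,|\Acal_d|^{1+\beta}$. Finally I would invoke Lemma~\ref{lem:size_hyperbolic_cross} in the form $|\Acal_d| \le M^{\lambda}\sum_{u\subseteq [1{:}d]}\gamma_u^{\lambda}(2\zeta(\alpha\lambda))^{|u|}$, valid for any $\lambda>1/\alpha$, raise it to the power $1+\beta$, and collect constants into $C_{c,\delta,\beta} \coloneqq 2cC'_{c,\delta}/\beta$ to arrive at the claimed inequality. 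This $C_{c,\delta,\beta}$ manifestly tends to infinity as $c\to\infty$ (through the factor $2c$), as $\delta\to 0$ (through $C'_{c,\delta}$), and as $\beta\to 0$ (through $1/\beta$), matching the stated behaviour.

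I do not anticipate a genuine obstacle; the only mild subtlety is to absorb the additive $+1$ and the $-\log\delta$ contributions in $L_{\max}$ into a single constant $C'_{c,\delta}$ without introducing any hidden dependence on $d$, $\alpha$, or $\bsgamma$. Since all of the dimension- and weight-dependence is carried entirely by the hyperbolic-cross size estimate of Lemma~\ref{lem:size_hyperbolic_cross}, while both the bound on $L_{\max}$ and the inequality $\log x \le x^{\beta}/\beta$ are dimension-free, the final constant $C_{c,\delta,\beta}$ is genuinely independent of $d$, $\alpha$, and the weights, as required.
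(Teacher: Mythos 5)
Your proposal is correct and follows essentially the same route as the paper's own proof: bound $N$ via Proposition~\ref{prop:complexity}, absorb the additive constant and $-\log\delta$ in $L_{\max}$ into a multiple of $\log|\Acal_d|$ (using $|\Acal_d|\ge 2$), convert the logarithm to $|\Acal_d|^{\beta}/\beta$, and finish with Lemma~\ref{lem:size_hyperbolic_cross}. The only difference is cosmetic bookkeeping order (you bound $L_{\max}$ by $C'_{c,\delta}\log|\Acal_d|$ before applying the elementary inequality, while the paper first writes $N\le C_{c,\delta}|\Acal_d|\log|\Acal_d|$), and your treatment of the constant absorption is in fact slightly more explicit than the paper's.
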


\begin{proof}
    From Proposition~\ref{prop:complexity} and the definition of $L_{\max}$ in Algorithm~\ref{Algorithm_2}, we have
    \begin{align*}
        N & \le 2cL_{\max}(|\Acal_d|-1) \\
        & \le 2c\left(\left( \frac{c}{c-1} \right)^2 \frac{\log | \Acal_d| - \log \delta}{2} +1 \right)(|\Acal_d|-1)\\
        & \le C_{c,\delta}|\Acal_d|\log | \Acal_d|,
    \end{align*}
    where $C_{c,\delta}>0$ is a constant depending only on $c$ and $\delta$. By applying the elementary inequality $\log x\le x^{\beta}/\beta$, which holds for any $x>0$ and $\beta\in (0,1)$, together with Lemma~\ref{lem:size_hyperbolic_cross}, we obtain
    \[ N \le \frac{C_{c,\delta}}{\beta}|\Acal_d|^{1+\beta}\le \frac{C_{c,\delta}}{\beta} M^{\lambda(1+\beta)} \left(\sum_{u\subseteq [1{:}d]}\gamma_u^{\lambda}(2\zeta(\alpha\lambda))^{|u|}\right)^{1+\beta},\]
    which completes the proof.
\end{proof}

\section{Lower bounds for single lattice-based algorithms}\label{appendix:b}

In this appendix, we provide two approaches to derive lower bounds on the worst-case $L_2$ error for single lattice-based algorithms. For simplicity, we consider the two-dimensional unweighted Korobov space with smoothness $\alpha>1/2$, and the number of points $N$ is assumed to be a prime number. The essence of both approaches lies in identifying a short non-zero vector in the dual lattice (which aliases to the origin) to construct a fooling function such that the corresponding approximation error is as large as possible. The first approach, relying on the pigeonhole principle, is based on \cite[Theorem~9.15]{HW81}. This result can be traced back to the early works of Smolyak \cite{Smo60} and Korobov \cite{K63}, and was recently exploited in \cite{BKUV17} (without referring to earlier works). The second approach, which is constructive and relies on the properties of continued fractions, follows the argument in \cite[Theorem~9.14]{HW81}, though we present a simplified derivation for clarity.

\subsection{A lower bound by Smolyak and Korobov}
Consider a rank-1 lattice $P_{N,\bsg}$ with generating vector $\bsg=(g_1,g_2)\in \{1,\ldots,N-1\}^2$. The elements $\bsk\in \ZZ^2$ of the corresponding dual lattice satisfy the congruence:
\[ k_1 g_1+k_2g_2\equiv 0 \pmod N.\]
By the pigeonhole principle, this equation is guaranteed to have at least one solution $\bsh$ such that
\[ 0< |h_1|,|h_2| \le \sqrt{N}. \]
Indeed, since there are $(\lfloor \sqrt{N}\rfloor+1)^2>N$ integer vectors $(k_1,k_2)$ in the set $\{0,\ldots,\lfloor \sqrt{N}\rfloor\}^2$, there must exist two distinct vectors $\bsk'$ and $\bsk''$ that yield the same value modulo $N$ in the congruence. Setting $\bsh=\bsk'-\bsk''$ provides the desired solution. Note that, since $N$ is prime, if one component of $\bsh$ were $0$, the other would also necessarily be a multiple of $N$, which is impossible given the bound $\sqrt{N}$ unless the vector is zero. This contradicts $\bsk'\ne \bsk''$.

Now, we construct a fooling function $f\in H_{2,\alpha,\bsone}$ with $\|f\|_{2,\alpha,\bsone}=1$ defined by
\[ f(x_1,x_2) = \frac{e^{2\pi ih_1x_1}-e^{-2\pi ih_2x_2}}{\sqrt{r_{\alpha,1}^2(h_1)+r_{\alpha,1}^2(h_2)}}. \]
For any point $\bsy_n\in P_{N,\bsg}$, the dual lattice property $h_1 g_1\equiv -h_2g_2 \pmod N$ implies 
\begin{align*}
    f(\bsy_n) & = \frac{1}{\sqrt{r_{\alpha,1}^2(h_1)+r_{\alpha,1}^2(h_2)}}\left( e^{2\pi i h_1g_1 n /N}-e^{-2\pi i h_2g_2 n /N}\right) \\
    & = \frac{1}{\sqrt{r_{\alpha,1}^2(h_1)+r_{\alpha,1}^2(h_2)}}\left( e^{2\pi i h_1g_1 n /N}-e^{2\pi i h_1g_1 n /N}\right) = 0.
\end{align*} 
Thus, all discrete Fourier coefficients $\widehat{f}_{N}(\bsk)$ computed via \eqref{eq:aliasing_formula} vanish for any $\bsk\in \Acal_d$. The $L_2$ approximation error is then given by the $L_2$-norm of the function itself:
\begin{align*}
    \|f-A_N^{\text{sing}}(f)\|_{L_2} & = \left(|\widehat{f}(h_1,0)|^2+|\widehat{f}(0,-h_2)|^2\right)^{1/2}\\
    & = \left(\frac{2}{r_{\alpha,1}^2(h_1)+r_{\alpha,1}^2(h_2)}\right)^{1/2}\\
    & \ge \frac{1}{r_{\alpha,1}(\lfloor \sqrt{N}\rfloor)}\ge N^{-\alpha/2}.
\end{align*}
This confirms that any single rank-1 lattice-based algorithm fails to exceed the rate $N^{-\alpha/2}$. As studied in \cite{BKUV17}, this argument applies to a more general class of approximation schemes based on function evaluations along a single rank-1 lattice.

\subsection{A lower bound by Hua and Wang}
To briefly introduce the concept, any real number $x$ can be expanded into a continued fraction of the form $x=[a_0; a_1,a_2,\ldots]$, where $a_0$ is an integer and $a_1,a_2,\ldots$ are positive integers. Truncating this expansion at $a_t$ yields a rational approximation $p_t/q_t\coloneqq [a_0;a_1,\ldots,a_t]$, which is called the $t$-th convergent of $x$.
It is well-known that
\[ \left| x-\frac{p_t}{q_t}\right|\le \frac{1}{q_tq_{t+1}}.\]
Consider a rank-1 lattice $P_{N,\bsg}$ with generating vector $\bsg=(g_1,g_2)\in \{1,\ldots,N-1\}^2$. Let $u=g_1^{-1}g_2 \pmod N$, where $g_1^{-1}$ denotes the multiplicative inverse of $g_1$ modulo $N$, and define $x=u/N$. Let $p_t/q_t$ be the $t$-th convergent of $x$. We define an integer $K$ by
\[ K \coloneqq uq_t-Np_t.\]
From the approximation property of continued fractions, we have
\[ |K|=Nq_t\left| \frac{u}{N}-\frac{p_t}{q_t}\right|\le \frac{N}{q_{t+1}}.\]
Furthermore, from the definition of $K$, we have the congruence $K \equiv u q_t \equiv g_1^{-1}g_2 q_t \pmod N$. Multiplying by $g_1$, we obtain
\[ g_1K\equiv g_2q_t \pmod N, \]
implying that $(K,-q_t)\in P^{\perp}_{N,\bsg}$.

Now, we construct a fooling function $f\in H_{2,\alpha,\bsone}$ with $\|f\|_{2,\alpha,\bsone}=1$ defined by
\begin{align*}
    f(x_1,x_2) = \frac{e^{2\pi iK x_1}-e^{2\pi iq_tx_2}}{\sqrt{r_{\alpha,1}^2(K)+r_{\alpha,1}^2(q_t)}}.
\end{align*}
For any point $\bsy_n\in P_{N,\bsg}$, the above congruence implies that the two exponential terms cancel out, so $f(\bsy_n)=0$. Thus, all discrete Fourier coefficients $\widehat{f}_{N}(\bsk)$ computed via \eqref{eq:aliasing_formula} vanish for any $\bsk\in \Acal_d$. The denominators of the convergents form a sequence satisfying $1=q_0<q_1<\cdots<q_m=N$ for some finite $m$ (since $\gcd(u,N)=1$). Thus, there exists a unique index $t$ such that $q_t\le \sqrt{N}<q_{t+1}$. With this choice, we have $|K|\le N/q_{t+1}<\sqrt{N}$. Therefore,
\begin{align*}
    \|f-A_N^{\text{sing}}(f)\|_{L_2} & = \left(|\widehat{f}(K,0)|^2+|\widehat{f}(0,q_t)|^2\right)^{1/2}\\
    & = \left(\frac{2}{r_{\alpha,1}^2(K)+r_{\alpha,1}^2(q_t)}\right)^{1/2}\\
    & \ge \frac{1}{r_{\alpha,1}(\lfloor \sqrt{N}\rfloor)}\ge N^{-\alpha/2}.
\end{align*}

\bibliographystyle{siam}
\bibliography{ref.bib}

\end{document}